\documentclass[11pt]{article}

\usepackage{latexsym,amsfonts,amssymb,amsmath,amsthm}
\usepackage{graphicx}
\usepackage{color}
\usepackage{mathrsfs}
\usepackage{amsmath, amsthm, amsfonts, amssymb, cite}
\usepackage{wrapfig}
\usepackage{stackrel}
\usepackage{color}
\usepackage{float}
\usepackage{enumitem}
\usepackage{mathtools}
\usepackage{multicol}
\usepackage[normalem]{ulem}
\usepackage{amsmath,amsthm,verbatim,amssymb,amsfonts,amscd,graphicx}
\usepackage{graphics}
\usepackage{relsize}
\usepackage{esvect}
\usepackage{mathtools}
\usepackage{physics}
\usepackage{multirow}

    \newtheoremstyle{TheoremNum}
        {\topsep}{\topsep}              
        {\itshape}                      
        {}                              
        {\bfseries}                     
        {.}                             
        { }                             
        {\thmname{#1}\thmnote{ \bfseries #3}}
    \theoremstyle{TheoremNum}

\topmargin0.0cm
\headheight0.0cm
\headsep0.0cm
\oddsidemargin0.0cm
\textheight23.0cm
\textwidth16.5cm
\footskip1.0cm
\theoremstyle{plain}
\newtheorem{theorem}{Theorem}
\newtheorem*{theorem*}{Theorem}
\newtheorem{corollary}{Corollary}
\newtheorem{lemma}{Lemma}
\newtheorem{proposition}{Proposition}

\newtheorem{question}{Question} 
\theoremstyle{definition}

\newtheorem{remark}{Remark}

\newtheorem*{ack}{Acknowledgements}
\begin{document}
\title{On the regularized $L^4$ norm for Eisenstein series in the level aspect}
\author{Jiakun Pan}
\maketitle

\section{Introduction}
One important goal of  modern number theory is to understand the familial limiting behavior of automorphic forms. When the behavior under observation is about the mass distribution of these family members, the Random Wave Conjecture (\textbf{RWC}) of Hejhal and Rackner \cite{HR} is a general heuristic. For an instant illustration, let us fix a compact subset $\Omega$ of $SL_2(\mathbb{Z})\backslash\mathbb{H}$, take $d\mu$ to be the hyperbolic probablistic measure, and write $\{ u_j \}_{j\geq 1}$ as the sequence of Hecke-Maass cuspforms of increasing order of eigenvalues $\lambda_j$, with renormalization $\int_{\Omega} |u_j|^2 d\mu =1$. Then according to RWC, as $j\rightarrow \infty$, we have for any even $p\geq 0$ that
\begin{align}\label{RWC}
\int_{\Omega} |u_j|^p d\mu \sim c_p := \frac{1}{\sqrt{2\pi}} \int_{-\infty}^{\infty} x^{p}e^{-\frac{x^2}{2}}dx,
\end{align}
also known as the \textit{Gaussian Moments Conjecture}. It is obvious that the cases $p=0,2$ are trivial. For $p=4$, Buttcane and Khan \cite{BuK} proved \eqref{RWC}, assuming the Generalized Lindel\"of Hypothesis, while Humphries and Khan \cite{HK} worked it out for dihedral forms unconditionally.

RWC for the second moment is closely connected to the Quantum Unique Ergodicity (\textbf{QUE}) conjecture of Rudnick and Sarnak \cite{RS}, which concerns mass distribution on negatively curved manifolds in general. Notably, the first unconditionally obtained QUE result is not for $u_j$, but $E_t:=E(z,\frac{1}{2}+it)$, the real analytic Eisenstein series. In terms of RWC for $p=2$, Luo and Sarnak \cite{LS} showed that if we replace $u_j$ with $\widetilde{E_t}:= (\tfrac{3}{\pi} \log(t^2+1))^{-1/2} E_t$, then \eqref{RWC} is still true for all $\Omega$ compact and Jordan measurable, as $t\rightarrow \infty$. When it comes to the case $p=4$, RWC says the fourth moment of $\widetilde{E}_t$ on $\Omega$ is asymptotically $c_4=3$,
and the best result is an upper bound $O_{\Omega}(\log^2 t)$, due to Spinu \cite{Sp} and Humphries \cite{H}.

Besides truncating the domain, there are alternative ways to deal with the divergence of $|E_t|^2$. Another treatment worthy of consideration is regularized integral invented by Zagier [Z] and developed by Michel and Vankatesh [MV], which provides nice period integrals to proceed. In this regard, Djankovi\'c and Khan \cite{DK1}, \cite{DK2} showed as $t\rightarrow \infty$,
\begin{align*}
\langle |E_t|^2, |E_t|^2 \rangle_{\mathrm{reg}} = \frac{72}{\pi} \log^2 t + O(\log^{\frac{3}{5}}t),
\end{align*}
for $\langle f, g \rangle_{\mathrm{reg}}:= \int_X^{\mathrm{reg}} f(z)\overline{g}(z) y^{-2} dxdy$ (see Section \ref{reduction} for definition).
\begin{remark}
One may as well consider the truncated Eisenstein series $E^Y$ obtained by some certain treatments on $E$. In fact, the $L^4$-norm of $E^Y$ is so closely related to the regularized integral, that they have also proved RWC for $E^Y$ and $p=4$. See \cite[thm 1.2]{DK1} for the detailed arguments.
\end{remark}

As is mentioned, RWC with $p=2$ is related to QUE, to which there are level aspect variations. For $\Gamma_0(N)$ Eisenstein series $E$ of primitive nebentypus $\chi$ mod $N$, the author and Young \cite{PY} obtained
\begin{align*}
\frac{1}{\mathrm{Vol }(\Omega)} \int_{\Omega} |E|^2 y^{-2}dxdy = \frac{6}{\pi} \log N + \frac{12}{\pi} \Re\frac{L'}{L}(s,\chi) + O(N^{-\frac{1}{8}+\varepsilon}),
\end{align*}
for some $s$ on the $1$-line. Note this result is unconditional, and under GRH the second term is $O(\log\log N)$. Since Eisenstein series on high levels always concerns Dirichlet $L$-functions instead of Riemann Zeta functions in the $t$-aspect, it would be not realistic to expect an asymptotic formula for the truncated $L^4$-norm of these Eisenstein series. For other automorphic forms, the level aspect QUE for holomorphic forms is proved by Nelson \cite{N1} and Nelson-Pitale-Saha \cite{NPS}, and Nelson \cite{N2} proved QUE on prime level Maass forms, up to some reasonable hypotheses. 

Since it is obvious from the definition of regularized integrals, that $\langle E, E \rangle_{\mathrm{reg}} =0$ for all $\Gamma_0(N)$ Eisenstein series, the non-trivial cases for the regularized $L^4$-norm begin with $p=4$. This paper is to partly address this problem.

\begin{theorem}
Let $N>1$, and $E= E_{\mathfrak{a}}(z,s,\chi)$ be a $\Gamma_0(N)$-Eisenstein series attached to an Atkin-Lehner cusp $\mathfrak{a}$, and of nebentypus $\chi$ primitive mod $N$.  As $N\rightarrow\infty$, we have
\begin{align*}
    \langle |E_{\mathfrak{a}}(\cdot,\frac{1}{2}+iT,\chi)|^2, |E_{\mathfrak{a}}(\cdot,\frac{1}{2}+iT,\chi)|^2 \rangle_{\mathrm{reg}} = I_1 + I_2,
\end{align*}
where (writing $\mathcal{O}_j(N)$ for an orthonormal basis for the space of level $N$ cusp forms of eigenvalue $\lambda_j = \tfrac{1}{4}+t_j^2$, and $\Lambda(s,f)=(\tfrac{q(f)}{\pi})^{\frac{s}{2}}\Gamma(\tfrac{s+\kappa_1(f)}{2})\Gamma(\tfrac{s+\kappa_2(f)}{2}) L(s,f)$ for the completed $L$-function)
\begin{multline}\label{I_1}
    \nu(N) I_{1}= \sum_{j\geq 1} \frac{\cosh(\pi t_j)}{2} \sum_{u_j \in \mathcal{O}_j(N)}  B(u_j,T) \frac{\Lambda^2(\frac{1}{2},u_j) |\Lambda(\frac{1}{2}+2iT, u_j\otimes \psi)|^2}{L(1,\mathrm{sym}^2 u_j) |\Lambda(1+2iT,\psi)|^4} \\
+ \frac{\nu(N)}{4\pi} \sum_{\mathfrak{b}} \int_{-\infty}^{\infty} \Big| \langle |E|^2, E_{\mathfrak{b}}(\cdot, \frac{1}{2}+it) \rangle_{\mathrm{reg}} \Big|^2 dt
\end{multline}
for some primitive $\psi = \psi(\chi,\mathfrak{a})$ mod $N$ to be defined in the next section, $B(u_j, T)\ll N^{\varepsilon}$ for some neglegible values from the bad Euler factors over primes dividing level of $u_j$, and 
\begin{align}\label{I_2}
    \nu(N) I_2 = \frac{24}{\pi}\log^2 N + O(\frac{L''}{L}(1+2iT,\psi) + (\log N + \frac{L'}{L}(1+2iT, \psi)) \log\log N).
\end{align}
\end{theorem}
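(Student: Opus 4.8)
The plan is to evaluate $\langle|E_{\mathfrak a}|^2,|E_{\mathfrak a}|^2\rangle_{\mathrm{reg}}$ by means of a regularized spectral expansion of $f(z):=|E_{\mathfrak a}(z,\tfrac12+iT,\chi)|^2$ together with a regularized Parseval identity, adapting to $\Gamma_0(N)$ with primitive nebentypus the regularization-and-spectral-decomposition strategy that Djankovi\'c and Khan used in the $t$-aspect. First I would read off the constant term of $f$ at each cusp $\mathfrak b$ from that of $E_{\mathfrak a}$, namely $\delta_{\mathfrak a\mathfrak b}y^{\frac12+iT}+\varphi_{\mathfrak a\mathfrak b}(\tfrac12+iT)y^{\frac12-iT}$; since $E_{\mathfrak a}(z,s,\chi)$ vanishes identically for odd $\chi$ we may assume $\chi$ even, and then for a primitive nebentypus at an Atkin--Lehner cusp the self-scattering $\varphi_{\mathfrak a\mathfrak a}$ vanishes, so the constant term of $f$ at $\mathfrak b$ is simply $c_{\mathfrak b}y+O(y^{-A})$ with $c_{\mathfrak b}=\delta_{\mathfrak a\mathfrak b}+|\varphi_{\mathfrak a\mathfrak b}(\tfrac12+iT)|^2$ and no $y^{1\pm2iT}$-oscillation, the $c_{\mathfrak b}$ summing to $2$ by unitarity of the scattering matrix on the critical line. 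Subtracting the trivial-character Eisenstein series attached to this $y$-growth (through the pole of $E_{\mathfrak b}(z,w)$ at $w=1$) renders $f$ square-integrable, the spectral theorem applies, and inserting the result into Zagier's definition of the regularized inner product gives
\begin{equation*}
\langle f,f\rangle_{\mathrm{reg}}=\sum_j|\langle f,u_j\rangle|^2+\frac1{4\pi}\sum_{\mathfrak b}\int_{-\infty}^{\infty}\bigl|\langle f,E_{\mathfrak b}(\cdot,\tfrac12+it)\rangle_{\mathrm{reg}}\bigr|^2\,dt+(\text{degenerate terms}),
\end{equation*}
the degenerate terms being the contribution of the constant eigenfunction together with the residual corrections produced by the $y$-growth. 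I would then \emph{define} $I_1$ to be the cuspidal sum and the continuous integral, and $I_2$ to be the degenerate terms, so that the theorem splits into an identity for $I_1$ and an asymptotic for $I_2$.

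For $I_1$ the coefficients $\langle f,u_j\rangle$ and $\langle f,E_{\mathfrak b}(\cdot,\tfrac12+it)\rangle_{\mathrm{reg}}$ are triple products of two Eisenstein series with a cusp form (respectively with an Eisenstein series), and I would compute them by the regularized Rankin--Selberg unfolding: writing one factor $E_{\mathfrak a}(z,\tfrac12+iT,\chi)$ as its defining series over $\Gamma_{\mathfrak a}\backslash\Gamma_0(N)$, unfolding against $\Gamma_{\mathfrak a}\backslash\mathbb{H}$, and replacing the remaining $\overline{E_{\mathfrak a}}$ and $u_j$ by their Fourier expansions, the $y$-integral becomes a product of a Dirichlet series in the Hecke eigenvalues and an archimedean integral of $K$-Bessel type. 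The Dirichlet series factors --- up to finitely many Euler factors over the primes dividing the level of $u_j$ --- as $L(\tfrac12,u_j)L(\tfrac12+2iT,u_j\otimes\psi)/L(1+2iT,\psi)^2$ with $\psi=\psi(\chi,\mathfrak a)$ the primitive character attached to the cusp, and the archimedean integral evaluates by the Gamma--Bessel formula to an explicit $\Gamma$-quotient; completing to the $\Lambda$'s of the statement, using $|\Gamma(\tfrac12+it_j)|^{-2}=\pi^{-1}\cosh(\pi t_j)$ to produce the factor $\tfrac12\cosh(\pi t_j)$, and passing from the Fourier normalization of $u_j$ to the Hecke normalization (the factor $L(1,\mathrm{sym}^2 u_j)^{-1}$), one arrives at exactly the summand of \eqref{I_1}, with $B(u_j,T)\ll N^\varepsilon$ absorbing the oldform and bad-prime local integrals. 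The continuous case is identical with $u_j$ replaced by $E_{\mathfrak b}(\cdot,\tfrac12+it)$ and reproduces the integral term verbatim. This part is the standard regularized Rankin--Selberg computation carried out at level $N$ with nebentypus and Atkin--Lehner cusps; it is routine but bookkeeping-heavy.

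The substance of the theorem is the asymptotic \eqref{I_2} for $I_2$. The constant-eigenfunction term is $\mathrm{Vol}(\Gamma_0(N)\backslash\mathbb{H})^{-1}|\langle f,1\rangle_{\mathrm{reg}}|^2=\tfrac3{\pi\nu(N)}|\langle f,1\rangle_{\mathrm{reg}}|^2$ with $\langle f,1\rangle_{\mathrm{reg}}=\int_X^{\mathrm{reg}}|E_{\mathfrak a}(z,\tfrac12+iT,\chi)|^2\,d\mu$ the regularized $L^2$-norm of the Eisenstein series at the centre of the critical line, and I would compute this from the Maass--Selberg relation, whose $Y$-independent part expresses the regularized norm through the logarithmic derivatives of the scattering coefficients $\varphi_{\mathfrak a\mathfrak b}(\cdot,\chi)$ at $\tfrac12+iT$; the residual corrections are treated similarly using Zagier's regularized Mellin values and the Laurent expansion of $E_{\mathfrak b}(z,w)$ at $w=1$. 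The key input is the explicit $\Gamma_0(N)$ scattering matrix for primitive nebentypus: each entry is a Gauss sum times an $N$-power times a $\Gamma$-quotient times a ratio of Dirichlet $L$-functions of conductor $N$, and after applying the relevant functional equations the $N$-powers coming from the prefactor and from the completed $L$-functions combine into a term of size $\log N$ with an explicit coefficient, while the surviving $L$-factors contribute $\tfrac{L'}{L}(1\pm2iT,\psi)$ and the $\Gamma$-quotients contribute $O(1)$. Collecting everything, the degenerate terms combine to $\nu(N)I_2=\tfrac{24}{\pi}\log^2 N+O(\dots)$, the remaining cross- and square-terms being packaged into the stated error via the identity $\bigl(\tfrac{L'}{L}\bigr)^2=\tfrac{L''}{L}-(\log L)''$ together with the zero-counting bound $(\log L)''(1+2iT,\psi)\ll\log N$. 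The main obstacle is precisely this evaluation of $I_2$: keeping exact track of every archimedean and Gauss-sum constant in the $\Gamma_0(N)$ scattering matrix so that the coefficient of $\log^2 N$ is exactly $\tfrac{24}{\pi}$, and controlling logarithmic derivatives of Dirichlet $L$-functions on the line $\Re s=1$ --- which is what forces the error term to involve both $\tfrac{L'}{L}$ and $\tfrac{L''}{L}$. Leaving the cuspidal sum and the continuous integral of $I_1$ unevaluated is unavoidable here, since converting them into an asymptotic would require level-aspect moment bounds for $L(\tfrac12,u_j)L(\tfrac12+2iT,u_j\otimes\psi)$ well beyond what is currently available.
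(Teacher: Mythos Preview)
Your proposal has a genuine gap in the treatment of $I_2$. The central difficulty, flagged explicitly at the opening of Section~\ref{reduction}, is that $|E_{\mathfrak a}(z,\tfrac12+iT,\chi)|^2$ is \emph{not} directly regularizable in Zagier's sense: its growth at the cusps $\mathfrak a$ and $\mathfrak a^*$ is exactly $c_{\mathfrak b}\,y$ (as you correctly observe), and the Eisenstein series that would cancel pure $y$-growth is $E_{\mathfrak b}(z,w)$ at $w=1$, where it has a pole. Your phrase ``through the pole of $E_{\mathfrak b}(z,w)$ at $w=1$'' is not a valid subtraction; the residue is a constant, which does not remove the $y$-growth, and there is no automorphic function of Eisenstein type one can subtract at that point to place $f$ in $L^2$. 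The paper (following Djankovi\'c--Khan) resolves this by deforming to generic $(s_1,s_2,s_3,s_4)$ so that the regularizer $\mathcal E_1=E_{\mathfrak a}(z,s_1+s_2)+\varphi_{\mathfrak a\mathfrak a^*}(s_1,\chi)\varphi_{\mathfrak a\mathfrak a^*}(s_2,\overline\chi)E_{\mathfrak a^*}(z,2-s_1-s_2)$ is well-defined, and then taking the limit $s_j\to\tfrac12+iT$. The quantity $I_2$ is \emph{defined} as the cross-terms $\langle E_{\mathfrak a}E_{\mathfrak a},\mathcal E_2\rangle_{\mathrm{reg}}+\langle\mathcal E_1,E_{\mathfrak a}E_{\mathfrak a}\rangle_{\mathrm{reg}}$, each a regularized triple product of Eisenstein series evaluated via Lemma~\ref{123}; the constant $\tfrac{24}{\pi}\log^2 N$ emerges only after a second-order Laurent expansion in the deformation parameter $\eta$, where the double poles of $\xi(1\pm\eta)^2$ cancel among the four terms $\Xi_1,\dots,\Xi_4$ and the $\log^2 N$ is produced by the derivatives $F_j''(0)$.

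Your alternative identification of $I_2$ with the constant-eigenfunction contribution $\mathrm{Vol}^{-1}\,|\langle f,1\rangle_{\mathrm{reg}}|^2$ cannot work: by Corollary~\ref{RNzero} (and as stated already in the introduction), $\langle E,E\rangle_{\mathrm{reg}}=0$ for every Eisenstein series, so $\langle f,1\rangle_{\mathrm{reg}}=\langle E_{\mathfrak a},E_{\mathfrak a}\rangle_{\mathrm{reg}}=0$ identically. The Maass--Selberg relation computes the \emph{truncated} norm $\int^Y|E|^2\,d\mu$, not the regularized one, and the two differ precisely by the divergent piece that Zagier's regularization discards. Hence your proposed source of the main term vanishes and no $\log^2 N$ is recovered. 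Your sketch of $I_1$ (unfolding the triple products to the displayed $L$-values, with bad-prime factors absorbed into $B(u_j,T)$) matches the paper's argument, but for $I_2$ you need the four-variable deformation and the explicit Laurent computation of Section~5.2 rather than Maass--Selberg.
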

\begin{remark}
The multiplication by $\nu(N)$ to $I_1$ and $I_2$ is under consideration of $L^4$-renormalization. That is to say, if we regard $E_{\mathfrak{a}}$ to be ``$L^2$-normalized" (they do have comparable behaviors with the classical Eisenstein series $E(z,\frac{1}{2}+it)$ in the $t$-aspect, see \cite{LS} and \cite{PY} for a QUE comparison), then we should expect $\int^{\mathrm{reg}}_{\scriptscriptstyle{\Gamma_0(N)\backslash\mathbb{H}}} |E_{\mathfrak{a}}|^4$ to have size $\asymp \nu(N)^{-1}$.
\end{remark}

To take a closer look at the above theorem, we firstly point it out that $\nu(N) I_2 = O(N^{\varepsilon})$ unconditionally, due to Siegel. For $\nu(N) I_1$, we are in short of methods to attack the continuous contribution in general. When $N$ is square-free, we can adopt an alternative ``formal orthonormal basis" for the Eisenstein series (see \cite[sec 5]{PY}), each of which works well with Fricke involution, making the period integrals easily calculable, and in fact, they are relatively small, just like in \cite[lma 4.4]{DK1}. Therefore, to derive an upper bound for the regularized fourth moment of $E$, it boils down to estimate the discrete contribution of $\nu(N)I_1$.

Going one step further, we are able to see (assuming $N$ is square-free)
\begin{align}\label{centraltask}
\nu(N)I_1 \sim \sum_{t_j \leq 2T + O(\log^2 T)} \sum_{u_j\in \mathcal{O}_j(N)} L^2(\tfrac{1}{2},u_j) |L(\tfrac{1}{2}+2iT, u_j \otimes \psi)|^2.
\end{align}
We think it is not technologically possible to obtain an asymptotic formula for it now, as the ``log ratio" here is $6$, while that number in \cite{DK2} is $4$. For this sum, the spectral large sieve implies
\begin{align*}
\sum_{t_j \leq 2T+O(\log^2 T)} \sum_{u_j} |L(\tfrac{1}{2}+2iT,u_j\otimes \psi)|^2 \ll_{_T,_\varepsilon} N^{1+\varepsilon},
\end{align*}
any subconvexity bound for $\max_{u_j} L^2(\tfrac{1}{2},u_j)$ will translate to an upper bound on $\nu(N)I_1$. The current best bound for $L(\tfrac{1}{2},u_j)$ is $O(N^{\frac{5}{24}+\frac{\theta}{12}})$ due to Blomer and Khan\footnote{They require $N$ to be relatively prime with $6$.} \cite{BlK}, where $\theta = \tfrac{7}{64}$ is the best known exponent such that $|\lambda_j(p)| \ll p^{\theta}+p^{-\theta}$ uniformly, by Kim and Sarnak \cite{KS}.
\begin{corollary}
Unconditionally\footnote{Needless to say, if we assume GRH, then the $O(N^{\varepsilon})$ bound is for granted.}, we have for large square-free $N$ 
\begin{align*}
\langle |E|^2, |E|^2 \rangle_{\mathrm{reg}} = O(N^{\frac{5}{12}+\frac{\theta}{6}+\varepsilon}).
\end{align*}
\end{corollary}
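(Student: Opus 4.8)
The plan is to substitute the Theorem's identity $\langle|E|^2,|E|^2\rangle_{\mathrm{reg}}=I_1+I_2$ into the estimates gathered just above, keeping track throughout of the factor $\nu(N)$, which satisfies $N\le\nu(N)\ll N^{1+\varepsilon}$. First I would dispose of $I_2$: by \eqref{I_2}, $\nu(N)I_2$ is $\frac{24}{\pi}\log^2 N$ plus an error built from $\frac{L'}{L}(1+2iT,\psi)$ and $\frac{L''}{L}(1+2iT,\psi)$, and since $\psi$ is primitive modulo $N$, Siegel's theorem together with the classical estimates for the logarithmic derivatives of a Dirichlet $L$-function on the line $\Re s=1$ gives $\frac{L'}{L}(1+2iT,\psi),\frac{L''}{L}(1+2iT,\psi)\ll_\varepsilon N^\varepsilon$. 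Hence $\nu(N)I_2\ll N^\varepsilon$, so $I_2\ll N^{-1+\varepsilon}$, which is absorbed by the asserted bound.

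For $I_1$ I would exploit square-freeness to kill the continuous term in \eqref{I_1}. Replacing the genuine orthonormal basis of the Eisenstein spectrum by the ``formal orthonormal basis'' of \cite[sec 5]{PY}, whose members transform cleanly under the Fricke and Atkin--Lehner involutions, the unfolding computation of \cite[lma 4.4]{DK1} applies and writes each regularized period $\langle|E|^2,E_{\mathfrak b}(\cdot,\tfrac12+it)\rangle_{\mathrm{reg}}$ as a ratio of completed $L$-functions of size $\ll N^{-1/2+\varepsilon}$, uniformly in $t$ and in the cusp $\mathfrak b$ (the saving coming from the conductors of the Dirichlet $L$-functions that occur). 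Since $\Gamma_0(N)$ has $\ll N^\varepsilon$ cusps, the continuous contribution $\frac{\nu(N)}{4\pi}\sum_{\mathfrak b}\int_{-\infty}^{\infty}\big|\langle|E|^2,E_{\mathfrak b}(\cdot,\tfrac12+it)\rangle_{\mathrm{reg}}\big|^2\,dt\ll N^{1+\varepsilon}\cdot N^{-1+\varepsilon}=N^\varepsilon$, and, using $B(u_j,T)\ll N^\varepsilon$ to discard the bad Euler factors, the Theorem reduces to the central sum of \eqref{centraltask}: $\nu(N)I_1\ll N^\varepsilon+N^\varepsilon\sum_{t_j\le 2T+O(\log^2 T)}\sum_{u_j\in\mathcal O_j(N)}L^2(\tfrac12,u_j)\,|L(\tfrac12+2iT,u_j\otimes\psi)|^2$.

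To estimate this sum I would pull $L^2(\tfrac12,u_j)$ out in sup-norm and keep the twisted values inside a large-sieve inequality. The Blomer--Khan subconvexity bound \cite{BlK} gives $\max_{u_j}L^2(\tfrac12,u_j)\ll N^{5/12+\theta/6+\varepsilon}$ (valid for $(N,6)=1$, with $\theta=7/64$ from \cite{KS}), while an approximate functional equation of length $\ll N^{1+\varepsilon}$ for $L(\tfrac12+2iT,u_j\otimes\psi)$, followed by Cauchy--Schwarz and the spectral large sieve over the bounded window $t_j\le 2T+O(\log^2 T)$, gives $\sum_{t_j}\sum_{u_j}|L(\tfrac12+2iT,u_j\otimes\psi)|^2\ll_{T,\varepsilon}N^{1+\varepsilon}$, as recorded above. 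Multiplying, $\nu(N)I_1\ll N^{5/12+\theta/6+\varepsilon}\cdot N^{1+\varepsilon}=N^{17/12+\theta/6+\varepsilon}$; dividing by $\nu(N)\ge N$ yields $I_1\ll N^{5/12+\theta/6+\varepsilon}$, and combining with $I_2\ll N^{-1+\varepsilon}$ proves the corollary.

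The one genuinely delicate point is the bound on the continuous contribution: square-freeness is used essentially there (to make the Fricke-adapted formal basis of \cite{PY} available and the triple-product periods factor nicely), and one must check that the conductor bookkeeping in the analogue of \cite[lma 4.4]{DK1} produces an honest power-of-$N$ saving rather than merely a polylogarithmic one. Everything else is a matter of assembling quoted ingredients — the Theorem, Siegel's theorem, \cite{BlK}, \cite{KS}, and the spectral large sieve — with attention only to the powers of $N$ hidden in $\nu(N)$ and in $B(u_j,T)$.
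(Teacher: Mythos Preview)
Your proposal is correct and follows essentially the same route as the paper: Siegel's theorem disposes of $I_2$, square-freeness together with the Fricke-adapted ``formal orthonormal basis'' of \cite[sec~5]{PY} makes the continuous piece of $I_1$ negligible, and the discrete piece is bounded by pulling out $\max_{u_j}L^2(\tfrac12,u_j)\ll N^{5/12+\theta/6+\varepsilon}$ via \cite{BlK} and applying the spectral large sieve to $\sum|L(\tfrac12+2iT,u_j\otimes\psi)|^2$. You have also correctly isolated the one step the paper leaves vague---the precise power-of-$N$ saving in the continuous term---as the only genuinely delicate point.
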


\begin{question}
Can we improve the estimation for \eqref{centraltask}?
\end{question}
In a following paper, we want to provide a sharper bound for $N$ prime.

\begin{remark}\label{GRH}
Assuming GRH, we can see $\nu(N) I_2 \sim \frac{24}{\pi} \log^2 N$, which is in agreement with \cite{DK1} in the spectral aspect. This makes us wonder if $\nu(N)I_1 \sim \tfrac{48}{\pi} \log^2 N$ also holds, corresponding to \cite{DK2}, and validating a level aspect analogue of RWC for $p=4$.
\end{remark}

\begin{ack}
The author thanks Roman Holowinsky and Rizwanur Khan for initial encouragements, and Peter Humphries, Sheng-Chi Liu and Ian Petrow for useful comments. He is supported, both academically and financially, by Dr. Matthew P. Young, his PhD advisor.
\end{ack}

\section{General strategy and spectral decomposition} \label{reduction}
Just like what happened in \cite{DK1}, \cite{Y} and \cite{PY}, $|E_{\mathfrak{a}}(z,\frac{1}{2}+iT,\chi)|^2$ is not directly regularizable, because we cannot subtract it by $E_{\mathfrak{a}}(z,1)$, which is not defined. Instead, we need to consider rewriting the $L^4$-norm as $\langle E_{\mathfrak{a}}(\cdot, s_1,\chi)E_{\mathfrak{a}}(\cdot,s_2,\overline{\chi}), \overline{E_{\mathfrak{a}}(\cdot,s_3,\chi)E_{\mathfrak{a}}(\cdot, s_4,\overline{\chi})} \rangle_{\mathrm{reg}}$, and find a path for $(s_1,s_2,s_3,s_4)\in \mathbb{C}^4$, such that it arrives at $(\frac{1}{2}+iT,\frac{1}{2}+iT,\frac{1}{2}+iT,\frac{1}{2}+iT)$ without touching any point of singularity. 
As is discussed in Section 4 of \cite{PY}, if we further assume $w_1+w_2, w_3+w_4 \neq 1$, then we can regularize $E_{\mathfrak{a}}(\cdot, s_1,\chi)E_{\mathfrak{a}}(\cdot, s_2,\overline{\chi})$ and $E_{\mathfrak{a}}(\cdot,\overline{s_3},\overline{\chi})E_{\mathfrak{a}}(\cdot, \overline{s_4},\chi)$ respectively. That is, there exists $\mathcal{E}_1$ and $\mathcal{E}_2$ such that
$E_{\mathfrak{a}}(\cdot, s_1,\chi)E_{\mathfrak{a}}(\cdot, s_2,\overline{\chi}) - \mathcal{E}_1$ and $E_{\mathfrak{a}}(\cdot,\overline{s_3},\overline{\chi})E_{\mathfrak{a}}(\cdot, \overline{s_4},\chi)-\mathcal{E}_2 \in L^2$. Since their product is in $L^1$, we have
\begin{align*}
\langle E_{\mathfrak{a}}(\cdot, s_1,\chi)E_{\mathfrak{a}}(\cdot, s_2,\overline{\chi}), E_{\mathfrak{a}}(\cdot,\overline{s_3},\overline{\chi})E_{\mathfrak{a}}(\cdot, \overline{s_4},\chi) \rangle_{\mathrm{reg}} = I_1 + I_2,
\end{align*}
where
\begin{align*}
I_1 = I_1(s_1,s_2,s_3,s_4)= \langle E_{\mathfrak{a}}(\cdot, s_1,\chi)E_{\mathfrak{a}}(\cdot, s_2,\overline{\chi})-\mathcal{E}_1, E_{\mathfrak{a}}(\cdot,\overline{s_3},\overline{\chi})E_{\mathfrak{a}}(\cdot, \overline{s_4},\chi)-\mathcal{E}_2 \rangle,
\end{align*}
and
\begin{align*}
I_2 = I_2(s_1,s_2,s_3,s_4) =
\langle E_{\mathfrak{a}}(\cdot, s_1,\chi)E_{\mathfrak{a}}(\cdot, s_2,\overline{\chi}), \mathcal{E}_2 \rangle_{\mathrm{reg}}
+
\langle\mathcal{E}_1, E_{\mathfrak{a}}(\cdot,\overline{s_3},\overline{\chi})E_{\mathfrak{a}}(\cdot, \overline{s_4},\chi) \rangle_{\mathrm{reg}}
+
\langle\mathcal{E}_1, \mathcal{E}_2 \rangle_{\mathrm{reg}}.
\end{align*}
In order to decide $\mathcal{E}_1$ and $\mathcal{E}_2$, one needs to study carefully the behavior of $E_{\mathfrak{a}}|_{\sigma_{\mathfrak{b}}}$ for every $\mathfrak{b}$, no matter singular or not for $\chi$. According to \cite[prop 8.1]{PY}, we have
\begin{align*}
\mathcal{E}_1=  \mathcal{E}_1(s_1,s_2) = E_{\mathfrak{a}}(z,s_1+s_2) + \varphi_{\mathfrak{aa}^*}(s_1,\chi) \varphi_{\mathfrak{aa}^*}(s_2,\overline{\chi})E_{\mathfrak{a}^*}(z,2-s_1-s_2),
\end{align*}
and
\begin{align*}
\mathcal{E}_2=  \mathcal{E}_2(s_3,s_4) = E_{\mathfrak{a}}(z,\overline{s_3}+\overline{s_4}) + \varphi_{\mathfrak{aa}^*}(\overline{s_3},\overline{\chi}) \varphi_{\mathfrak{aa}^*}(\overline{s_4},\chi) E_{\mathfrak{a}^*}(z,2-\overline{s_3}-\overline{s_4}).
\end{align*}
Corollary 4.4 of [PaY] says that $\langle\mathcal{E}_1, \mathcal{E}_2 \rangle_{\mathrm{reg}}=0$, so it suffices to compute the first two terms of $I_2$.

One nice feature of the regularized integral is it is easily computable when an Eisenstein series attached to a cusp is a factor of the integrand.

Now we need to introduce the regularized integrals.
Roughly speaking, if an $SL_2(\mathbb{Z})$-automorphic function $F$ is of moderate growth, then there always exists $\mathcal{E}$, a linear combination of Eisenstein series, such that $F-\mathcal{E} = O(\sqrt{y})$ as $y\rightarrow \infty$, and still maintains automorphy. Then the convergent integral $\int (F-\mathcal{E})$ is defined to be the renormalized integral of $\int F$.
\begin{theorem}\label{RN}
For automorphic function $F$ of moderate growth and $w\neq 0,1$, we have
\begin{align*}
    \langle F, E_{\mathfrak{a}}(\cdot,w) \rangle_{\mathrm{reg}} = \int_0^{\infty} y^{w-2} (F(\sigma_{\mathfrak{a}}z) - \psi_{\mathfrak{a}}(y)) dy,
\end{align*}
where $\sigma_{\mathfrak{a}}\in SL_2(\mathbb{R})$ is any scaling matrix, and $\psi_{\mathfrak{a}}$ is the moderate growth part of $F(\sigma_{\mathfrak{a}}z)$.
\end{theorem}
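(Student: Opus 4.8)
The plan is to prove Theorem~\ref{RN} by Rankin--Selberg unfolding: establish the identity first in a region of $w$ where both sides converge absolutely, then extend by meromorphic continuation. Both sides are holomorphic in $w$ on $\mathbb{C}\setminus\{0,1\}$ --- the left side because the regularized pairing inherits poles only from $E_{\mathfrak{a}}(\cdot,w)$ and from the regularization itself, and the right side because, writing $F_{\mathfrak{a}}(y)=\int_0^1 F(\sigma_{\mathfrak{a}}(x+iy))\,dx$ for the zeroth Fourier coefficient of $F$ at $\mathfrak{a}$ (this is what ``$F(\sigma_{\mathfrak{a}}z)$'' means in the display, and it is independent of the choice of $\sigma_{\mathfrak{a}}$ by the translation invariance of $\sigma_{\mathfrak{a}}^{-1}\Gamma_{\mathfrak{a}}\sigma_{\mathfrak{a}}$), one splits $\int_0^\infty=\int_0^1+\int_1^\infty$: the tail is entire since $F_{\mathfrak{a}}-\psi_{\mathfrak{a}}$ decays, and the piece near $0$ continues meromorphically with poles only at $w=0,1$ from the power-function asymptotics of $F_{\mathfrak{a}}$ at the cusp. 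So it suffices to treat $\Re w$ large.

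For $\Re w>1$ we may use the absolutely convergent expansion $E_{\mathfrak{a}}(z,w)=\sum_{\gamma\in\Gamma_{\mathfrak{a}}\backslash\Gamma}\Im(\sigma_{\mathfrak{a}}^{-1}\gamma z)^{w}$. Write $F=(F-\mathcal{E})+\mathcal{E}$, where $\mathcal{E}$ is an Eisenstein combination whose constant term at each cusp is the moderate-growth part of that of $F$, so that $F-\mathcal{E}$ decays at every cusp (in the cases of interest $F$ is a product of Eisenstein series and $F-\mathcal{E}$ is exponentially small). The regularized integral being linear, and $(F-\mathcal{E})\,\overline{E_{\mathfrak{a}}(\cdot,w)}$ being absolutely integrable --- rapid decay against polynomial growth --- we obtain $\langle F,E_{\mathfrak{a}}(\cdot,w)\rangle_{\mathrm{reg}}=\langle F-\mathcal{E},E_{\mathfrak{a}}(\cdot,w)\rangle+\langle \mathcal{E},E_{\mathfrak{a}}(\cdot,w)\rangle_{\mathrm{reg}}$. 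Unfolding the first term against the Poincar\'e series and using $\Gamma_{\mathfrak{a}}$-periodicity to collapse the quotient to the strip $0\le x<1$ gives $\int_0^\infty y^{w-2}(F-\mathcal{E})_{\mathfrak{a}}(y)\,dy=\int_0^\infty y^{w-2}\big(F_{\mathfrak{a}}(y)-\psi_{\mathfrak{a}}(y)\big)\,dy$, the constant term of $\mathcal{E}$ at $\mathfrak{a}$ being exactly $\psi_{\mathfrak{a}}(y)$ and the nonzero Fourier modes integrating to zero in $x$; the routine conjugation of $w$ is suppressed, as in the statement.

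It remains to check $\langle \mathcal{E},E_{\mathfrak{a}}(\cdot,w)\rangle_{\mathrm{reg}}=0$. This is the regularized pairing of two Eisenstein combinations, and it vanishes by Corollary~4.4 of \cite{PY}, or directly from the Maass--Selberg relations: the integral of $E_{\mathfrak{b}}(\cdot,v)\,\overline{E_{\mathfrak{a}}(\cdot,w)}$ over $\{y\le A\}$ has an asymptotic expansion in powers of $A$ with vanishing constant term for generic $v,w$, and the regularized value is by definition that constant term, the remaining parameters being reached by continuity. Together with the previous paragraph this proves the identity for $\Re w>1$, and the first paragraph concludes.

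The step needing the most care is the unfolding itself: one cannot unfold $\langle F,E_{\mathfrak{a}}(\cdot,w)\rangle_{\mathrm{reg}}$ directly, since $F\,\overline{E_{\mathfrak{a}}(\cdot,w)}$ grows at the cusps and the $\gamma$-summation cannot be pulled through the integral; the splitting $F=(F-\mathcal{E})+\mathcal{E}$ is exactly what makes the unfolded strip integral converge, and the vanishing $\langle\mathcal{E},E_{\mathfrak{a}}(\cdot,w)\rangle_{\mathrm{reg}}=0$ is what transports the regularization across it. The only other thing to confirm is that $F_{\mathfrak{a}}$ has power-function asymptotics at the cusp, so that $\psi_{\mathfrak{a}}$ is well defined and the right-hand side continues meromorphically; for products of Eisenstein series this is immediate.
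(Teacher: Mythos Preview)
The paper does not supply a proof of this theorem: it is stated without proof, together with Corollary~\ref{RNzero}, as a known input from the theory of regularized integrals due to Zagier~\cite{Z} and Michel--Venkatesh~\cite{MV}. So there is no ``paper's own proof'' to compare against.

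Your argument is the standard one and is correct in outline. The Rankin--Selberg unfolding after the subtraction $F\mapsto F-\mathcal{E}$, followed by the vanishing $\langle\mathcal{E},E_{\mathfrak{a}}(\cdot,w)\rangle_{\mathrm{reg}}=0$ via Maass--Selberg, is precisely Zagier's original mechanism. Two small remarks. First, you rightly interpret the ``$F(\sigma_{\mathfrak{a}}z)$'' in the display as the zeroth Fourier coefficient $\int_0^1 F(\sigma_{\mathfrak{a}}(x+iy))\,dx$; this is how the theorem is actually applied in the paper (see the proof of Lemma~\ref{123}), though the statement as written suppresses the $x$-integration. Second, your appeal to Corollary~4.4 of~\cite{PY} for the vanishing of $\langle\mathcal{E},E_{\mathfrak{a}}(\cdot,w)\rangle_{\mathrm{reg}}$ is fine and not circular, since that result is proved there directly from Maass--Selberg and does not rely on the present theorem; in the current paper the analogous Corollary~\ref{RNzero} is placed after Theorem~\ref{RN} only for expository convenience, not logical dependence.
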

\begin{corollary}\label{RNzero}
The regularized integral of any double product of Eisenstein series is zero.
\end{corollary}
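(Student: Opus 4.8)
The plan is to obtain this as a direct consequence of Theorem~\ref{RN} by unfolding one of the two factors against the other. A typical double product has the form $E_{\mathfrak{a}}(\cdot,s_1,\chi_1)\,E_{\mathfrak{b}}(\cdot,s_2,\chi_2)$ for cusps $\mathfrak{a},\mathfrak{b}$ and parameters $s_1,s_2$ (all products arising in the computation of $\langle\mathcal{E}_1,\mathcal{E}_2\rangle_{\mathrm{reg}}$ above are of this shape, up to conjugating a nebentypus character). Using the relation $\overline{E_{\mathfrak{b}}(z,\overline{s_2},\overline{\chi_2})}=E_{\mathfrak{b}}(z,s_2,\chi_2)$, the regularized integral of this product is nothing but the regularized inner product $\langle\,E_{\mathfrak{a}}(\cdot,s_1,\chi_1),\,E_{\mathfrak{b}}(\cdot,\overline{s_2},\overline{\chi_2})\,\rangle_{\mathrm{reg}}$, and Theorem~\ref{RN} applies to it with $F=E_{\mathfrak{a}}(\cdot,s_1,\chi_1)$, $w=s_2$, and the cusp taken to be $\mathfrak{b}$ (legitimate whenever $s_2\neq 0,1$, which holds for all products encountered; otherwise one continues analytically in $s_2$).

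Theorem~\ref{RN} then gives
\begin{align*}
\langle\,E_{\mathfrak{a}}(\cdot,s_1,\chi_1),\,E_{\mathfrak{b}}(\cdot,\overline{s_2},\overline{\chi_2})\,\rangle_{\mathrm{reg}}
=\int_0^{\infty}y^{s_2-2}\big(C_{\mathfrak{b}}(y)-\psi_{\mathfrak{b}}(y)\big)\,dy,
\end{align*}
where $C_{\mathfrak{b}}(y)=\int_0^1 E_{\mathfrak{a}}(\sigma_{\mathfrak{b}}(x+iy),s_1,\chi_1)\,dx$ is the zeroth Fourier coefficient of $E_{\mathfrak{a}}(\cdot,s_1,\chi_1)$ at the cusp $\mathfrak{b}$ and $\psi_{\mathfrak{b}}$ is its moderate-growth part. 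The decisive point is that for a \emph{single} Eisenstein series this zeroth coefficient is already a finite sum of pure powers of $y$: by the standard Fourier development at a cusp, $C_{\mathfrak{b}}(y)=\delta_{\mathfrak{a}\mathfrak{b}}\,y^{s_1}+\varphi_{\mathfrak{a}\mathfrak{b}}(s_1,\chi_1)\,y^{1-s_1}$ (and $C_{\mathfrak{b}}\equiv 0$ at cusps that are not singular for $\chi_1$), with no exponentially decaying contribution surviving in the constant term. Hence $C_{\mathfrak{b}}$ coincides with its own moderate-growth part, the integrand $C_{\mathfrak{b}}(y)-\psi_{\mathfrak{b}}(y)$ vanishes identically, and the integral is $0$. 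Summing the finitely many such terms recovers $\langle\mathcal{E}_1,\mathcal{E}_2\rangle_{\mathrm{reg}}=0$ and, more generally, the vanishing of the regularized integral of any double product.

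I do not expect a genuine obstacle: essentially all the work has been done in Theorem~\ref{RN}, which converts the regularized integral of a double product into the integral of a function against its own moderate-growth part. The only place that needs care is the edge cases --- matching complex conjugates and nebentypus so that the integrand truly falls under Theorem~\ref{RN}; staying away from $s_2\in\{0,1\}$ and from the coincidences (e.g.\ $\Re s_1=\Re s_2$ with the imaginary parts conspiring) that would push an exponent onto $0$ or $1$ and introduce a logarithmic term, all of which are excluded by the genericity hypotheses $w_1+w_2,\,w_3+w_4\neq 1$ imposed in this section; and noting that the borderline monomials $y^{1/2\pm iT}$ relevant at $s_1=\tfrac12+iT$ still count as part of the moderate-growth part $\psi_{\mathfrak{b}}$, so that none of $C_{\mathfrak{b}}$ is left over.
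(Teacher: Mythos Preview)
Your argument is correct and is exactly the approach the paper intends: apply Theorem~\ref{RN} and observe that for a single Eisenstein series the zeroth Fourier coefficient at any cusp is already a finite linear combination of $y^{s}$ and $y^{1-s}$, hence equals its own moderate-growth part $\psi_{\mathfrak{b}}$, so the integrand vanishes identically. One small remark: Theorem~\ref{RN} as stated unfolds only against a trivial-nebentypus series $E_{\mathfrak{b}}(\cdot,w)$, so your reduction should take $\chi_2$ trivial (which is precisely the case arising from $\mathcal{E}_1,\mathcal{E}_2$); the same reasoning of course extends to arbitrary nebentypus once Theorem~\ref{RN} is so generalized.
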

\begin{remark}
All meromorphic functions in this paper is continuable. So, throughout, we directly assume a function $f(s)$ is defined on $\mathbb{C}$ from the beginning, as long as it is meromorphic on some half plane $\Re s> C$.
\end{remark}

\begin{remark}
All implied constants are assumed to be related with $\varepsilon$ and $T$ if not specified otherwise.
\end{remark}

Applying Plancherel, we have
\begin{multline*}
I_1 = \langle |E|^2-\mathcal{E}_1, |E|^2-\mathcal{E}_2 \rangle = \sum_j \sum_{u_j} \langle |E|^2-\mathcal{E}_1, u_j \rangle \langle u_j, |E|^2-\mathcal{E}_2 \rangle \\
+ \frac{1}{4\pi} \sum_{\mathfrak{b}} \int_{-\infty}^{\infty} \langle |E|^2-\mathcal{E}_1, E_{\mathfrak{b}}(\cdot, \frac{1}{2}+it) \rangle_{\mathrm{reg}} \langle E_{\mathfrak{b}}(\cdot, \frac{1}{2}+it), |E|^2-\mathcal{E}_2 \rangle_{\mathrm{reg}} dt.
\end{multline*}
Then by orthogonality and Corollary \ref{RNzero}, we can see
\begin{align*}
I_1 = \sum_j \sum_{u_j} |\langle |E|^2, u_j\rangle |^2+ \frac{1}{4\pi} \sum_{\mathfrak{b}} \int_{-\infty}^{\infty} \Big|\langle |E|^2, E_{\mathfrak{b}}(\cdot, \frac{1}{2}+it) \rangle_{\mathrm{reg}} \Big|^2 dt.
\end{align*}
Since the $L^2$-normalized $u_j$ has Fourier expansion
\begin{align*}
\rho_j \sqrt{y} \sum_{n\neq 0} \lambda_j(n) e(nx) K_{it_j}(2\pi |n| y),
\end{align*}
with $\rho_j^2 =  \frac{\cosh(\pi t_j)}{2\nu(N) L(1, \mathrm{sym}^2 u_j)}$, we arrive at \eqref{I_1}\footnote{see page 24 of \cite{PY} for a similar argument of the bad Euler factors $B(u_j,T)$, where they wrote by $F_j(A,B)$.} after unfolding and computing the period integrals (Theorem \ref{RN} for regularized inner products).

\section{Eisenstein series}
This section introduces necessary facts on Eisenstein series. Please turn to \cite{Y} and \cite{PY} for more.

For fixed $N$, we have congruence subgroup $\Gamma=\Gamma_0(N)$ of the full modular group $SL_2(\mathbb{Z})$
\begin{align*}
    \Big\{ \big(\begin{smallmatrix} a&b\\c&d \end{smallmatrix}\big) \Big| \medspace ad-bc=1, c\equiv 0 \text{ mod }N \Big\}.
\end{align*}
Let $\chi$ be a Dirichlet character of modulus $N$, suppose it is even ($\chi(-1)=1$), and define $\chi(\gamma)=\chi(d)$ where $d$ is the lower-right entry of $\gamma$. If a function $f$ is $\Gamma$-invariant, i.e., $f|_{\gamma}=f(\gamma z)=\chi(\gamma)f(z)$ for all $z\in \mathbb{H}$ and all $\gamma\in \Gamma$, then $f$ is an automorphic function of level $N$, weight $0$, and nebentypus $\chi$. 

Call an element $\mathfrak{a}$ of $\mathbb{P}^1(\mathbb{Q})$ a \textit{cusp}, and define $q_2:=\frac{N}{q_1}$ for every $q_1 \mid N$. Cusps are classified into $\Gamma_0(N)$-equivalence classes, and a nice choice of representative set is $\mathcal{C}:=\{ u/q_1 : q_1\mid N, (u,N)=1, u \text{ mod }(q_1,q_2)\}$. Write the stabilizer subgroup of $\mathfrak{a}$ in $\Gamma$ as $\Gamma_{\mathfrak{a}}$, and pick $\sigma_{\mathfrak{a}}\in SL_2(\mathbb{R})$ such that $\sigma_{\mathfrak{a}}\infty =\mathfrak{a}$ and $\sigma_{\mathfrak{a}}^{-1}\Gamma_{\mathfrak{a}}\sigma_{\mathfrak{a}}=\Gamma_{\infty}$.

The Eisenstein series of level $N$ and nebentypus $\chi \pmod{N}$ \textit{attached to the cusp} $\mathfrak{a}$ is
\begin{align*}
E_{\mathfrak{a}}(z,s, \chi)= \sum_{\gamma\in\Gamma_{\mathfrak{a}}\backslash\Gamma}\overline{\chi}(\gamma)(\text{Im}\thinspace \sigma_{\mathfrak{a}}^{-1}\gamma z)^s.
\end{align*}
The expression does not depend on the choice of $\sigma_{\mathfrak{a}}$, so it is well-defined.
As one can see from the above expression, to make this well-defined, $\mathfrak{a}$ must be \textit{singular} for $\chi$. That is, $\Gamma_{\mathfrak{a}}$ is contained in the kernel of $\chi$. It is obvious that $\Gamma$-equivalent cusps are either both, or neither singular. If we denote the subset of $\mathcal{C}$ of cusps singular for $\chi$ by $\mathcal{C}_{\chi}$, then $\mathcal{C}_{\chi}=\mathcal{C}$ if $\chi=\chi_{_0,_N}$ is the principal nebentypus, and $\mathcal{C}_{\chi}=\{1/q_1: (q_1,q_2)=1 \}$ if $\chi$ is primitive mod $N$.

Given primitive characters $\chi_j$ mod $q_j$ for $j=1,2$, we define \textit{the completed Eisenstein series attached to characters} $\chi_1,\chi_2$ by
\begin{align*}
    E_{\chi_1, \chi_2}^*(z,s)= \frac{q_2^s \pi^{-s}}{2\tau(\chi_2)} \Gamma(s)L(2s,\chi_1\chi_2) \sum_{(c,d)=1}\frac{(q_2 y)^s \chi_1(c)\chi_2(d)}{|c q_2 z +d|^{2s}}.
\end{align*}
Note that $E_{\chi_1,\chi_2}^*$ is an automorphic function of weight $0$, level $N$, and nebentypus $\chi_1\overline{\chi_2}$. 
$E^*_{\chi_1,\chi_2}(z,s)$ has three good properties.

To begin with, $E^*_{\chi_1,\chi_2}(z,s)$ has nice Fourier expansion
\begin{align*}
    e^*(y,s) + 2\sqrt{y}\sum_{n\neq 0}\lambda_{\chi_1,\chi_2}(n,s)e(nx)K_{s-\frac{1}{2}}(2\pi |n|y),
\end{align*}
where $e^*(y,s)$ is some linear combination of $y^s$ and $y^{1-s}$, $K$ is the Bessel function, and
\begin{align*}
    \lambda_{\chi_1,\chi_2}(n,s) = \chi_2(\frac{|n|}{n})\sum_{ab=|n|}\chi_1(a)\overline{\chi_2}(b)(\frac{b}{a})^{s-\frac{1}{2}}.
\end{align*}

Next, we have the following functional equation due to Huxley [H]:
\begin{align*}
    E^*_{\chi_1,\chi_2}(z,s) = E^*_{\overline{\chi_2},\overline{\chi_1}}(z,1-s).
\end{align*}

In addition, we also have in (9.1) of [Y] that
\begin{align}\label{(9.1)}
    E_{\chi_1,\chi_2}^*(\sigma_{1/q_2}z,s) &= \frac{\tau(\chi_1\chi_2)}{\tau(\chi_2)}(\frac{q_2}{N})^{s}E_{1,\chi_1\chi_2}^*(z,s);\\
    \label{(9.2)}
    E_{\overline{\chi_2},\overline{\chi_1}}^*(\sigma_{1/q_1}z,s) &= \frac{\tau(\overline{\chi_1\chi_2})}{\tau(\overline{\chi_1})}(\frac{q_1}{N})^s E_{1,\overline{\chi_1\chi_2}}^*(z,s).
\end{align}

If $\chi$ is primitive mod $N$, then $(q_1,q_2)=1$, and hence $\chi$ can uniquely be decomposed as $\chi_1\overline{\chi_2}$ for $\chi_j$ primitive mod $q_j$, $j=1,2$. By \cite[(6.2)]{Y}, we have
\begin{align}
    E_{\frac{1}{q_2}}(z,s,\chi) = N^{-s} \frac{\chi_1(-1) \tau(\chi_2)}{\Lambda(2s,\chi_1\chi_2)} E_{\chi_1,\chi_2}^*(z,s).
\end{align}

When $\chi$ is primitive mod $N$, $\mathfrak{a}=\frac{1}{q_2}\in \mathcal{C}_{\chi}$, we denote $\frac{1}{q_1}$ by $\mathfrak{a}^*$. By Corollary 3.17 and Proposition 3.18 of \cite{PY}, we have the following cuspidal behavior formulas for $\mathfrak{b}\in \mathcal{C}$ and $y\rightarrow\infty$:
\begin{align}
    E_{\mathfrak{a}}(\sigma_{\mathfrak{b}}z,s,\chi) = \begin{cases} y^s+ O(p^{-\mathbb{N}}) & \text{ if }\mathfrak{b}=\mathfrak{a}\\ \overline{\tau}(\overline{\chi_1})\tau(\chi_2) N^{-s} \frac{\Lambda(2-2s,\overline{\chi_1 \chi_2})}{\Lambda(2s,\chi_1 \chi_2 )} y^{1-s} + O(p^{-\mathbb{N}}) & \text{ if }\mathfrak{b}=\mathfrak{a}^* \\ O(p^{-\mathbb{N}}) & \text{ otherwise} \end{cases}.
\end{align}

\section{Two consequences of GRH}
This section is to justify Remark \ref{GRH}.
\begin{lemma}\label{GRH}
Assume GRH is true, $\Re s=1$ and $\chi$ is primitive mod $N$, then we have
\begin{align}
\label{GRH1}\frac{L'}{L}(s,\chi) &\ll\log\log N;\\
\label{GRH2}\frac{L''}{L}(s,\chi) &\ll (\log\log N)^{3+\varepsilon}. 
\end{align}
\end{lemma}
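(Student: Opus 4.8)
The plan is to show that, under GRH, on the line $\Re s=1$ --- with $s$ ranging over a bounded segment, as in our applications where $s=1+2iT$ with $T$ fixed --- the quantities $\frac{L'}{L}(s,\chi)$ and $\big(\frac{L'}{L}\big)'(s,\chi)$ are each approximated to within $o(1)$ by a very short Dirichlet polynomial over prime powers; the two bounds then follow from Mertens-type estimates. Since $\chi$ is primitive modulo $N>1$ it is non-principal, so $L(s,\chi)$ is entire, and GRH makes it non-vanishing for $\Re s>\tfrac{1}{2}$; hence $\frac{L'}{L}(s,\chi)$ and its derivatives are holomorphic on a fixed neighbourhood of the half-plane $\Re s\ge1$, and for $\Re s>1$ one has the absolutely convergent expansions
\begin{align*}
\frac{L'}{L}(s,\chi) &= -\sum_{n\ge2}\frac{\Lambda(n)\chi(n)}{n^{s}}, \\
\Big(\frac{L'}{L}\Big)'(s,\chi) &= \sum_{n\ge2}\frac{\Lambda(n)\chi(n)\log n}{n^{s}}.
\end{align*}

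For the first series I would insert a fixed smooth cut-off $\Phi(n/x)$ (with $\Phi\equiv1$ on $[0,1]$, compactly supported, and $x\ge2$ a parameter), rewrite the smoothed sum as a contour integral of $\frac{L'}{L}(s+w,\chi)$ against $x^{w}$ times the Mellin transform of $\Phi$ on the line $\Re w=2$, and move the contour to $\Re w=-\tfrac{3}{4}$. The pole at $w=0$ returns $-\frac{L'}{L}(s,\chi)$; the poles at $w=\rho-s$, over the non-trivial zeros $\rho$, each contribute $\ll x^{\Re\rho-1}=x^{-1/2}$ by GRH, and sum --- against the rapidly decaying weight, and using that the number of zeros within $O(1)$ of the relevant (bounded) height is $\ll\log N$ --- to $\ll x^{-1/2}\log N$; the shifted integral, taken along a line kept zero-free by GRH, is $\ll x^{-3/4}\log N$, and the trivial zeros do not intervene. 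Differentiating the resulting identity in $s$ --- legitimate, since every term is holomorphic near the $1$-line under GRH --- yields the companion approximation for $\big(\frac{L'}{L}\big)'(s,\chi)$, at the cost of one extra factor $\log x\le\log N$ in the error. Taking $x=(\log N)^{10}$ kills every error term, and trivially bounding the two smoothed prime-power sums then gives
\begin{align*}
\Big|\frac{L'}{L}(s,\chi)\Big| &\le \sum_{n\le x}\frac{\Lambda(n)}{n}+o(1)=\log x+O(1)\ll\log\log N, \\
\Big|\Big(\frac{L'}{L}\Big)'(s,\chi)\Big| &\le \sum_{n\le x}\frac{\Lambda(n)\log n}{n}+o(1)\ll(\log x)^{2}\ll(\log\log N)^{2}.
\end{align*}
The first of these is precisely \eqref{GRH1}.

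Finally, \eqref{GRH2} follows from the identity $\frac{L''}{L}=\big(\frac{L'}{L}\big)'+\big(\frac{L'}{L}\big)^{2}$: the two estimates just obtained give $\frac{L''}{L}(s,\chi)\ll(\log\log N)^{2}$, in fact slightly stronger than claimed. (One could also reach the weaker stated bound $(\log\log N)^{3+\varepsilon}$ without the second approximation, by a Cauchy estimate for $L''$ on the circle $|w-s|=(\log\log N)^{-1}$ together with the Littlewood-type bounds $|L(w,\chi)|^{\pm1}\ll(\log N)^{o(1)}$ --- themselves consequences of GRH --- which hold on that circle since it stays inside $\Re w\ge1-(\log\log N)^{-1}$.) The only step I expect to need genuine care is the contour shift described above: checking that, with $x$ merely a fixed power of $\log N$, the sum over zeros and the shifted integral are uniformly $o(1)$ as $s$ ranges over a bounded segment of the line $\Re s=1$. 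Once GRH has pinned every non-trivial zero to $\Re w=\tfrac{1}{2}$ this is a standard conditional explicit-formula computation, which I would cite rather than reproduce; everything else reduces to two Mertens-type sums and an algebraic identity.
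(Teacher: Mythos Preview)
Your argument is correct and follows essentially the same architecture as the paper's proof: both deduce \eqref{GRH1} from a standard GRH-conditional explicit formula (the paper cites Theorem~5.17 of \cite{IK}), then differentiate an explicit formula for $-\tfrac{L'}{L}(s,\chi)$ to control $(\tfrac{L'}{L})'(s,\chi)$, and finally recover $\tfrac{L''}{L}$ via the identity $\tfrac{L''}{L}=(\tfrac{L'}{L})'+(\tfrac{L'}{L})^2$. The paper, rather than setting up the Mellin inversion from scratch, invokes Proposition~5.16 of \cite{IK} with the piecewise-linear weight $\phi(y)=\max\{1-y,0\}$ and $X=\log^{2+\varepsilon}N$, then differentiates and bounds the prime sum crudely by $\int_2^X t^{-1}\log^2 t\,dt\asymp\log^3 X$, which is where the exponent $3+\varepsilon$ comes from. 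Your smooth cut-off with $x=(\log N)^{10}$ and the sharper Mertens estimate $\sum_{n\le x}\Lambda(n)(\log n)/n\asymp\tfrac12\log^2 x$ actually yield $(\tfrac{L'}{L})'(s,\chi)\ll(\log\log N)^2$, hence $\tfrac{L''}{L}(s,\chi)\ll(\log\log N)^2$, which is strictly stronger than the paper's stated \eqref{GRH2}; the paper's weaker exponent is simply a by-product of a wasteful upper bound on the prime sum, not of any genuine obstruction.
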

\begin{proof}
Inequality \eqref{GRH1} is in Theorem 5.17 of [IK]. As for \eqref{GRH2}, we firstly recall that
\begin{align*}
(\frac{L'}{L})'(s,\chi) = \frac{L''}{L}(s,\chi) - (\frac{L'}{L})^2(s,\chi).
\end{align*}
By \eqref{GRH1}, we know $(\frac{L'}{L})^2(s,\chi)\ll (\log\log N)^2$; Proposition 5.16 of [IK] says that
\begin{align*}
-\frac{L'}{L}(s,\chi) = \sum_{p} \frac{\chi(p)\log p}{p^s} \phi(\frac{p}{X}) + \sum_{\rho}\hat{\phi}(\rho-s)X^{\rho-s}+ O(1),
\end{align*}
for $X=\log^{2+\varepsilon} N(|s|+1)$, $\phi(y) = \max\{1-y,0\}$, and $\hat{\phi}(w)=w^{-1}(w+1)^{-1}$. Taking derivative on both sides, we see
\begin{align*}
\Big| (\frac{L'}{L})'(s,\chi) \Big| &\leq \sum_{p\leq X}\frac{\log^2 p}{p} + \log X \sum_{\rho}\frac{1}{|\rho-s| \cdot |\rho+1-s|} + O(1)\\
&\ll \int_2^X \frac{\log^2 t}{t} dt + \log X \cdot O(\sum_{\rho} |\rho|^{-2}).
\end{align*}
Since $\sum_{\rho} |\rho|^{-2}$ converges, above can further be bounded by $(\log\log N)^{3+\varepsilon}$. So is $\frac{L''}{L}(s,\chi)$.
\end{proof}

\section{Proof of Theorem 1}
Since we have separated the regularizec fourth moment of $E$ and proved \eqref{I_1} in Section \ref{reduction}, we only need to prove \eqref{I_2} in this section.
\subsection{Preparation}
\begin{lemma}\label{123}
Assume $w_1\neq w_2, 1-w_2$, and $w_3\neq 0,1$. Then
\begin{multline*}
    \langle E_{\chi_1,\chi_2}^*(\cdot,w_1) E_{\overline{\chi_1},\overline{\chi_2}}^*(\cdot,w_2), E_{\mathfrak{a}}(\cdot,\overline{w_3}) \rangle_{\mathrm{reg}} = N^{-w_3}q_1 (\frac{q_2}{N})^{w_1+w_2} \frac{1}{\xi(2w_3)} \prod_{p\mid N}\frac{1-p^{w_1+w_2-w_3-1}}{1-p^{-2w_3}} \\
    \xi(w_1+w_2+w_3-1)\Lambda(w_1-w_2+w_3,\chi_1\chi_2)\Lambda(-w_1+w_2+w_3,\overline{\chi_1\chi_2})\xi(-w_1-w_2+w_3+1).
\end{multline*}
\end{lemma}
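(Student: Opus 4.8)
The plan is to unfold the regularized inner product against $E_{\mathfrak a}(\cdot,\overline{w_3})$ by Theorem \ref{RN}, then use the transformation formula \eqref{(9.1)} to pull both completed Eisenstein factors down to the ``simple'' series $E^*_{1,\psi}$, $E^*_{1,\overline{\psi}}$ with $\psi:=\chi_1\chi_2$ at the cusp $\mathfrak a=\tfrac1{q_2}$, and finally evaluate the archimedean (Bessel) and non-archimedean (Dirichlet series) pieces separately. For the first step, write $F:=E^*_{\chi_1,\chi_2}(\cdot,w_1)E^*_{\overline{\chi_1},\overline{\chi_2}}(\cdot,w_2)$; since the two factors have nebentypus $\chi$ and $\overline\chi$, $F$ has trivial nebentypus and is of moderate growth, so using $\overline{E_{\mathfrak a}(\cdot,\overline{w_3})}=E_{\mathfrak a}(\cdot,w_3)$ (trivial nebentypus is real) and $w_3\neq0,1$, Theorem \ref{RN} yields $\langle F,E_{\mathfrak a}(\cdot,\overline{w_3})\rangle_{\mathrm{reg}}=\int_0^\infty y^{w_3-2}\big(c_0(y)-\psi_{\mathfrak a}(y)\big)\,dy$, where $c_0(y)=\int_0^1 F(\sigma_{\mathfrak a}(x+iy))\,dx$ is the constant term of $F$ at $\mathfrak a$ and $\psi_{\mathfrak a}$ is its moderate-growth (finite sum of powers $y^\alpha$) part.

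Next I apply \eqref{(9.1)} to each factor: the ``second characters'' $\chi_2$ and $\overline{\chi_2}$ both have modulus $q_2$, so
\[
E^*_{\chi_1,\chi_2}(\sigma_{1/q_2}z,w_1)=\tfrac{\tau(\chi_1\chi_2)}{\tau(\chi_2)}\big(\tfrac{q_2}{N}\big)^{w_1}E^*_{1,\psi}(z,w_1),\qquad
E^*_{\overline{\chi_1},\overline{\chi_2}}(\sigma_{1/q_2}z,w_2)=\tfrac{\tau(\overline{\chi_1\chi_2})}{\tau(\overline{\chi_2})}\big(\tfrac{q_2}{N}\big)^{w_2}E^*_{1,\overline{\psi}}(z,w_2).
\]
Using $|\tau(\cdot)|^2=(\text{modulus})$ and $\tau(\overline{\cdot})=(\cdot)(-1)\overline{\tau(\cdot)}$ for $\psi=\chi_1\chi_2$ primitive mod $N$ and for $\chi_2$ mod $q_2$, together with the evenness of $\chi$ (which makes $\psi$ even), the product of the Gauss-sum constants collapses, so $F(\sigma_{1/q_2}z)=q_1(q_2/N)^{w_1+w_2}E^*_{1,\psi}(z,w_1)E^*_{1,\overline{\psi}}(z,w_2)$; note $\psi$ is primitive mod $N$, hence nontrivial for $N>1$, so both factors are entire in the spectral parameters. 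Now insert the Fourier expansions of $E^*_{1,\psi}$ and $E^*_{1,\overline\psi}$ and carry out $\int_0^1\cdot\,dx$: the product of the two $e^*$-functions is a combination of the powers $y^{w_1+w_2},y^{1\pm(w_1-w_2)},y^{2-w_1-w_2}$ (these are exactly $\psi_{\mathfrak a}$, subtracted off — the hypotheses $w_1\neq w_2,1-w_2$ keep this a clean sum of distinct powers), while the Bessel cross terms, after rewriting $\lambda_{1,\psi}(n,s)$ as a twisted divisor sum and simplifying the sign characters, contribute
\[
8\,\overline\psi(-1)\,y\sum_{n\geq1}n^{1-w_1-w_2}\Big(\sum_{d\mid n}\overline\psi(d)d^{2w_1-1}\Big)\Big(\sum_{b\mid n}\psi(b)b^{2w_2-1}\Big)K_{w_1-\frac12}(2\pi n y)K_{w_2-\frac12}(2\pi n y).
\]

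For $\Re w_3$ large I interchange sum and integral. Each term is evaluated by the Mellin transform of a product of two $K$-Bessel functions,
\[
\int_0^\infty y^{w_3-1}K_{w_1-\frac12}(2\pi n y)K_{w_2-\frac12}(2\pi n y)\,dy=\frac{(2\pi n)^{-w_3}\,2^{w_3-3}}{\Gamma(w_3)}\,\Gamma\big(\tfrac{w_1+w_2+w_3-1}{2}\big)\Gamma\big(\tfrac{w_1-w_2+w_3}{2}\big)\Gamma\big(\tfrac{-w_1+w_2+w_3}{2}\big)\Gamma\big(\tfrac{-w_1-w_2+w_3+1}{2}\big),
\]
and the remaining Dirichlet series $\sum_{n\geq1}n^{1-w_1-w_2-w_3}(\sum_{d\mid n}\overline\psi(d)d^{2w_1-1})(\sum_{b\mid n}\psi(b)b^{2w_2-1})$ is computed by the Ramanujan-type Euler product identity: the ``good'' primes give $\zeta(w_1{+}w_2{+}w_3{-}1)\zeta(-w_1{-}w_2{+}w_3{+}1)L(w_1{-}w_2{+}w_3,\psi)L(-w_1{+}w_2{+}w_3,\overline\psi)/\zeta(2w_3)$, and the ramified factors at $p\mid N$ (where $\psi(p)=0$) produce exactly $\prod_{p\mid N}\frac{1-p^{w_1+w_2-w_3-1}}{1-p^{-2w_3}}$. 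Folding the four Gamma-factors, the $\pi$-powers and $\Gamma(w_3)^{-1}$ into the corresponding $\zeta$- and $L$-values (using that $\psi,\overline\psi$ are even, so the relevant $\Lambda$ carries the $\Gamma(s/2)$-factor, while $\Gamma(w_3)\zeta(2w_3)$ together with $\pi^{w_3}$ becomes $\xi(2w_3)$), and collecting the powers of $N$ and the prefactor $q_1(q_2/N)^{w_1+w_2}$ with $\overline\psi(-1)=1$, gives the stated identity for $\Re w_3\gg1$, and then on all of $\mathbb C$ by analytic continuation. The computation is entirely routine; the only place demanding real care — and the main obstacle — is bookkeeping in this last step: pinning down the Bessel–Mellin constant and the four shifted Gamma-arguments exactly, matching them together with the $\pi$- and $N$-powers to the archimedean parts of $\xi(w_1{+}w_2{+}w_3{-}1)$, $\Lambda(w_1{-}w_2{+}w_3,\chi_1\chi_2)$, $\Lambda(-w_1{+}w_2{+}w_3,\overline{\chi_1\chi_2})$, $\xi(-w_1{-}w_2{+}w_3{+}1)$ and the denominator $\xi(2w_3)$, and correctly tracking the ramified Euler factors at $p\mid N$, which are the sole source of the finite product over $p\mid N$.
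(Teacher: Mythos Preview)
Your proof is correct and follows essentially the same route as the paper's: apply Theorem~\ref{RN} to unfold against $E_{\mathfrak a}(\cdot,\overline{w_3})$, use \eqref{(9.1)} at $\mathfrak a=1/q_2$ to reduce both factors to $E^*_{1,\psi}$-type series (collapsing the Gauss-sum ratio to $q_1$), strip off the moderate-growth part, and then evaluate the archimedean piece by the Mellin transform of $K_{w_1-1/2}K_{w_2-1/2}$ (Gradshteyn--Ryzhik 6.576.4) and the arithmetic piece by the Ramanujan-type factorization of $\sum_{n\geq 1}\lambda_{1,\psi}(n,w_1)\lambda_{1,\overline\psi}(n,w_2)n^{-w_3}$, before completing. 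The only cosmetic difference is that the paper cites the Dirichlet-series identity from Iwaniec rather than recomputing the Euler product (including the ramified correction at $p\mid N$) directly as you do.
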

\begin{proof}
Theorem \ref{RN} says that the left hand side equals
\begin{align*}
   \int_0^{\infty}y^{w_3-2} (\int_0^1 E_{\chi_1,\chi_2}^*(\sigma_{\mathfrak{a}}z,w_1) E_{\chi_1,\chi_2}^*(\sigma_{\mathfrak{a}}z,w_2)) dx - \Psi) dy,
\end{align*}
where $\Psi$ is the moderate growth part of $F=E_{\chi_1,\chi_2}^*(z,w_1)E_{\chi_1,\chi_2}^*(\cdot,w_2)$. Applying (\ref{(9.1)}), we can rewrite it with
\begin{align*}
    q_1 (\frac{q_2}{N})^{w_1+w_2}\int_0^{\infty}y^{w_3-2} (\int_0^1 E_{1,\chi_1\chi_2}^*(z,w_1) E_{\chi_1,\chi_2}^*(\cdot,w_2) dx - \Psi) dy.
\end{align*}

Since the moderate growth part of $F$ is exactly the product of the moderate growth parts of $E_{1,\chi_1\chi_2}^*(z,w_1)$ and $E_{1,\overline{\chi_1\chi_2}}^*(z,w_2)$, we further write it as (note $\lambda_{1,\chi_1\chi_2}(-1)+\lambda_{1,\overline{\chi_1}\overline{\chi_2}}(-1)=2$ by evenness of $\chi_1\chi_2$)
\begin{multline*}
    4q_1 (\frac{q_2}{N})^{w_1+w_2} \int_0^{\infty}y^{w_3-1} \sum_{n\neq 0}\lambda_{1,\chi_1\chi_2}(n,w_1)\lambda_{1,\overline{\chi_1\chi_2}}(-n,w_2) K_{w_1-\frac{1}{2}}(2\pi|n|y) K_{w_2-\frac{1}{2}}(2\pi|n|y) dy\\
    =8q_1 (\frac{q_2}{N})^{w_1+w_2} \sum_{n\geq 1}\frac{\lambda_{1,\chi_1\chi_2}(n,w_1)\lambda_{1,\overline{\chi_1\chi_2}}(n,w_2)}{n^{w_3}} \int_0^{\infty} y^{w_3-1} K_{w_1-\frac{1}{2}}(2\pi y) K_{w_2-\frac{1}{2}}(2\pi y) dy.
\end{multline*}
By (6.576.4) of [GR], we know the integral equals
\begin{align*}
    \frac{\pi^{-w_3}}{8 \Gamma(w_3)} \prod_{\epsilon_1=\pm 1}\prod_{\epsilon_2=\pm 1} \Gamma(\frac{w_3 +\epsilon_1(w_1-\frac{1}{2})+\epsilon_2(w_2-\frac{1}{2})}{2}).
\end{align*}
while the Dirichlet factors into (see (13.1) of [I1])
\begin{align*}
    \frac{\zeta(w_1+w_2+w_3-1) L(w_1-w_2+w_3,\chi_1\chi_2) L(-w_1+w_2+w_3,\overline{\chi_1\chi_2}) L(-w_1-w_2+w_3+1,\chi_{_0,_N})}{L(2w_3,\chi_{_0,_N})}.
\end{align*}
Completing these $L$-functions with proper factors, we obtain the right hand side.
\end{proof}
\begin{proposition}
We have
\begin{align*}
    I_2
    = &H_1 \xi(s_1+s_2+s_3+s_4-1) \xi(-s_1-s_2+s_3+s_4+1) \\
    + &H_2 \xi(-s_1-s_2-s_3-s_4+3) \xi(s_1+s_2-s_3-s_4+1) \\
    + &H_3 \xi(s_1+s_2+s_3+s_4-1) \xi(s_1+s_2-s_3-s_4+1) \\
    + &H_4 \xi(-s_1-s_2-s_3-s_4+3) \xi(-s_1-s_2+s_3+s_4+1),
\end{align*}
where $H_j=H_j(s_1,s_2,s_3,s_4)$ for $j=1,2,3,4$ with
\begin{multline*}
    H_1 = N^{-s_1-s_2-s_3-s_4+1} \frac{\Lambda(s_1-s_2+s_3+s_4,\chi_1\chi_2)\Lambda(-s_1+s_2+s_3+s_4, \overline{\chi_1\chi_2})}{\xi(2s_3+2s_4) \Lambda(2s_1,\chi_1\chi_2)\Lambda(2s_2,\overline{\chi_1\chi_2})} \\
    \prod_{p\mid N}\frac{1-p^{s_1+s_2-s_3-s_4-1}}{1-p^{-2s_3-2s_4}},
\end{multline*}
\begin{multline*}
H_2 = N^{-1} \frac{\Lambda(s_1-s_2-s_3-s_4+2,\chi_1\chi_2)\Lambda(-s_1+s_2-s_3-s_4+2,\overline{\chi_1\chi_2})}{\xi(-2s_3-2s_4+4) \Lambda(2s_1,\chi_1\chi_2)\Lambda(2s_2,\overline{\chi_1\chi_2})} \\
\frac{\Lambda(2-2s_3,\overline{\chi_1\chi_2}) \Lambda(2-2s_4,\chi_1\chi_2)}{\Lambda(2s_3,\chi_1\chi_2)\Lambda(2s_4,\overline{\chi_1\chi_2})} \prod_{p\mid N}\frac{1-p^{-s_1-s_2+s_3+s_4-1}}{1-p^{2s_3+2s_4-4}},
\end{multline*}
\begin{multline*}
    H_3 = N^{-s_1-s_2-s_3-s_4+1} \frac{\Lambda(s_1+s_2+s_3-s_4,\chi_1\chi_2)\Lambda(s_1+s_2-s_3+s_4, \overline{\chi_1\chi_2})}{\xi(2s_1+2s_2) \Lambda(2s_3,\chi_1\chi_2) \Lambda(2s_4,\overline{\chi_1\chi_2})} \\
    \prod_{p\mid N}\frac{1-p^{-s_1-s_2+s_3+s_4-1}}{1-p^{-2s_1-2s_2}},
\end{multline*}
\begin{multline*}
H_4 = N^{-1} \frac{\Lambda(-s_1-s_2+s_3-s_4+2,\chi_1\chi_2)\Lambda(-s_1-s_2-s_3+s_4+2,\overline{\chi_1\chi_2})}{\xi(-2s_1-2s_2+4) \Lambda(2s_3,\chi_1\chi_2)\Lambda(2s_4,\overline{\chi_1\chi_2})} \\
\frac{\Lambda(2-2s_1,\overline{\chi_1\chi_2})\Lambda(2-2s_2,\chi_1\chi_2)}{\Lambda(2s_1,\chi_1\chi_2)\Lambda(2s_2,\overline{\chi_1\chi_2})} \prod_{p\mid N}\frac{1-p^{s_1+s_2-s_3-s_4-1}}{1-p^{2s_1+2s_2-4}}.
\end{multline*}
\end{proposition}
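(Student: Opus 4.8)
\noindent\emph{Proof strategy.} The plan is to reduce $I_2$ to four applications of Lemma~\ref{123}. By Corollary~\ref{RNzero} the term $\langle\mathcal{E}_1,\mathcal{E}_2\rangle_{\mathrm{reg}}$ vanishes (each of $\mathcal{E}_1,\mathcal{E}_2$ is a linear combination of single Eisenstein series, so their product is a sum of double products), leaving
\[
I_2=\big\langle E_{\mathfrak{a}}(\cdot,s_1,\chi)E_{\mathfrak{a}}(\cdot,s_2,\overline{\chi}),\mathcal{E}_2\big\rangle_{\mathrm{reg}}+\big\langle\mathcal{E}_1,E_{\mathfrak{a}}(\cdot,\overline{s_3},\overline{\chi})E_{\mathfrak{a}}(\cdot,\overline{s_4},\chi)\big\rangle_{\mathrm{reg}}.
\]
Inserting the explicit formulas for $\mathcal{E}_1,\mathcal{E}_2$ and using conjugate-linearity in the second slot (together with $\overline{E_{\mathfrak{b}}(z,w)}=E_{\mathfrak{b}}(z,\overline w)$ for the trivial-nebentypus Eisenstein series appearing), the first inner product becomes a pairing of $E_{\mathfrak{a}}(\cdot,s_1,\chi)E_{\mathfrak{a}}(\cdot,s_2,\overline{\chi})$ against $E_{\mathfrak{a}}(\cdot,\overline{s_3+s_4})$ (which will yield the $H_1$-line) plus $\varphi_{\mathfrak{aa}^*}(\overline{s_3},\overline{\chi})\varphi_{\mathfrak{aa}^*}(\overline{s_4},\chi)$ times a pairing against $E_{\mathfrak{a}^*}(\cdot,\overline{2-s_3-s_4})$ (which will yield $H_2$). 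The second inner product breaks analogously into the $H_3$-line (pairing against $E_{\mathfrak{a}}(\cdot,s_1+s_2)$) and the $H_4$-line (pairing against $E_{\mathfrak{a}^*}(\cdot,2-s_1-s_2)$, with prefactor $\varphi_{\mathfrak{aa}^*}(s_1,\chi)\varphi_{\mathfrak{aa}^*}(s_2,\overline{\chi})$); in fact $H_3,H_4$ are just the images of $H_1,H_2$ under $(s_1,s_2)\leftrightarrow(s_3,s_4)$.

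To evaluate the $H_1$-line I would first pass to completed Eisenstein series via $E_{1/q_2}(z,s,\chi)=N^{-s}\chi_1(-1)\tau(\chi_2)\Lambda(2s,\chi_1\chi_2)^{-1}E^*_{\chi_1,\chi_2}(z,s)$ and its $\overline{\chi}$-analogue (replace $\chi_1,\chi_2$ by $\overline{\chi_1},\overline{\chi_2}$), giving
\[
E_{\mathfrak{a}}(\cdot,s_1,\chi)E_{\mathfrak{a}}(\cdot,s_2,\overline{\chi})=\frac{\chi_2(-1)\,q_2}{N^{s_1+s_2}\,\Lambda(2s_1,\chi_1\chi_2)\Lambda(2s_2,\overline{\chi_1\chi_2})}\,E^*_{\chi_1,\chi_2}(\cdot,s_1)\,E^*_{\overline{\chi_1},\overline{\chi_2}}(\cdot,s_2),
\]
where I used $\tau(\chi_2)\tau(\overline{\chi_2})=\chi_2(-1)q_2$. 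Then Lemma~\ref{123} with $(w_1,w_2,w_3)=(s_1,s_2,s_3+s_4)$ evaluates the remaining pairing; matching its output term-by-term against the claimed $H_1\,\xi(s_1+s_2+s_3+s_4-1)\xi(-s_1-s_2+s_3+s_4+1)$ is then just a matter of identifying $\xi(w_1+w_2+w_3-1)$, $\xi(-w_1-w_2+w_3+1)$, the two $\Lambda$-factors, $\xi(2w_3)^{-1}$ and the local factors at $p\mid N$, and consolidating all the remaining elementary factors — powers of $N,q_1,q_2$, the Gauss sums, and the conductor factors hidden inside the $\Lambda$'s — into the monomial $N^{-s_1-s_2-s_3-s_4+1}$ via $q_1q_2=N$.

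The only step needing genuinely new (though routine) computation is the $\mathfrak{a}^*$-analogue of Lemma~\ref{123}, required for the $H_2$- and $H_4$-lines, where the Eisenstein factor sits at the companion cusp $\mathfrak{a}^*=1/q_1$. The proof of Lemma~\ref{123} invokes the cusp $\mathfrak{a}=1/q_2$ only through the scaling relation \eqref{(9.1)}; since \eqref{(9.2)} is the corresponding relation at $\sigma_{1/q_1}$, and $E^*_{\chi_1,\chi_2}(z,s)=E^*_{\overline{\chi_2},\overline{\chi_1}}(z,1-s)$ by the Huxley functional equation, the same argument produces the $\mathfrak{a}^*$-version with $(\chi_1,\chi_2,q_1,q_2)$ replaced throughout by $(\overline{\chi_2},\overline{\chi_1},q_2,q_1)$. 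Applying it with $w_3=2-s_3-s_4$ gives $\xi(w_1+w_2+w_3-1)=\xi(s_1+s_2-s_3-s_4+1)$ and $\xi(-w_1-w_2+w_3+1)=\xi(-s_1-s_2-s_3-s_4+3)$, while the scattering prefactor, whose closed form $\overline{\tau}(\overline{\chi_1})\tau(\chi_2)N^{-s}\Lambda(2-2s,\overline{\chi_1\chi_2})\Lambda(2s,\chi_1\chi_2)^{-1}$ is read off the cuspidal behavior formula of Section~3, supplies exactly the ratio $\Lambda(2-2s_3,\overline{\chi_1\chi_2})\Lambda(2-2s_4,\chi_1\chi_2)\Lambda(2s_3,\chi_1\chi_2)^{-1}\Lambda(2s_4,\overline{\chi_1\chi_2})^{-1}$ in $H_2$; this produces the $H_2$-line, and $H_3,H_4$ follow by exchanging $(s_1,s_2)\leftrightarrow(s_3,s_4)$. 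Summing the four lines gives the identity on the open set where all four applications of Lemma~\ref{123} are legitimate (in particular where the relevant $w_i\neq w_j,1-w_j$ and $w_3\neq 0,1$), and hence everywhere by meromorphic continuation.

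The main obstacle is purely organizational: keeping the conjugation conventions straight (the pairing is conjugate-linear in the second slot while $\mathcal{E}_2$ is written in $\overline{s_3},\overline{s_4}$), and collapsing the abundance of elementary prefactors — powers of $N,q_1,q_2$, the Gauss sums $\tau(\chi_2),\overline{\tau}(\overline{\chi_1}),\tau(\chi_1\chi_2)$ and their conjugates, and the local factors at $p\mid N$ arising both from the conversion formula and from Lemma~\ref{123} — into precisely the $H_j$ displayed. Checking the $\mathfrak{a}^*$-analogue of Lemma~\ref{123} carefully is the one place where a short direct computation, rather than a bare appeal, is needed.
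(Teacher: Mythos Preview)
Your proposal is correct and follows essentially the same approach as the paper: convert $E_{\mathfrak{a}}(\cdot,s,\chi)$ to $E^*_{\chi_1,\chi_2}$ via the normalization formula, apply Lemma~\ref{123} with $(w_1,w_2,w_3)=(s_1,s_2,s_3+s_4)$ to obtain the $H_1$-line, and handle the remaining three terms analogously, using \eqref{(9.2)} in place of \eqref{(9.1)} for the $\mathfrak{a}^*$-terms. Your write-up is in fact more explicit than the paper's, which dispatches the $H_2,H_3,H_4$-lines in a single sentence; your observation that $H_3,H_4$ arise from $H_1,H_2$ by the swap $(s_1,s_2)\leftrightarrow(s_3,s_4)$ and your spelling out of the $\mathfrak{a}^*$-analogue via the Huxley functional equation are exactly the details the paper leaves implicit.
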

\begin{proof}
Since $E_{\mathfrak{a}}(z,s,\chi)=N^{-s}\chi_1(-1)\rho_{\chi_1,\chi_2}(s)E^*_{\chi_1,\chi_2}(z,s)$ with
\begin{align*}
\rho_{\chi_1,\chi_2}(s):=\frac{q_1^s \tau(\chi_2)}{\Lambda(2s,\chi_1\chi_2)},
\end{align*}
we see that $ \langle E_{\mathfrak{a}}(\cdot,s_1,\chi) E_{\mathfrak{a}}(\cdot, s_2,\overline{\chi}), E_{\mathfrak{a}}(\cdot, s_3+ s_4) \rangle_{\mathrm{reg}}$ equals
\begin{align*}
N^{-s_1-s_2}\rho_{\chi_1,\chi_2}(s_1)\rho_{\overline{\chi_1},\overline{\chi_2}}(s_2)\langle E^*_{\chi_1,\chi_2}(\cdot,s_1) \overline{E^*_{\chi_1,\chi_2}(\cdot,s_2)}, E_{\mathfrak{a}}(\cdot, s_3+ s_4) \rangle_{\mathrm{reg}}.
\end{align*}
Applying Lemma \ref{123} with $w_1=s_1, w_2=s_2$ and $w_3=s_3+s_4$, we see above further equals
\begin{multline*}
\frac{N^{-s_1-s_2-s_3-s_4}q_1^{s_1+s_2-s_3-s_4+1}\tau(\chi_2)\overline{\tau(\chi_2)}}{\xi(2s_3+2s_4) \Lambda(2s_1,\chi_1\chi_2)\Lambda(2s_2,\overline{\chi_1\chi_2})} \Lambda(s_1-s_2+s_3+s_4,\chi_1\chi_2)\Lambda(-s_1+s_2+s_3+s_4, \overline{\chi_1\chi_2}) \\
\xi(s_1+s_2+s_3+s_4-1)\xi(-s_1-s_2+s_3+s_4+1) \prod_{p\mid N}\frac{1-p^{s_1+s_2-s_3-s_4-1}}{1-p^{-2s_3-2s_4}}.
\end{multline*}
This accounts for the first term $H_1 \xi(s_1+s_2+s_3+s_4-1) \xi(-s_1-s_2+s_3+s_4+1)$. The other three terms can be obtained in the same way, except that we adopt \eqref{(9.2)} in place of $\eqref{(9.1)}$ for the second and fourth terms.
\end{proof}
\subsection{Estimation of $I_2$}
Now set $s_1=s_3=\frac{1}{2}+iT$, $s_2=\frac{1}{2}+\eta'-iT$ and $s_4=\frac{1}{2}+\eta-iT$ with $0<\eta'<\eta<\frac{1}{4}$. Under limit $\eta'\rightarrow 0$, $I_2$ tends to
\begin{align*}
\underbrace{F_1(\eta) \xi^2(1+\eta)}_{\Xi_1} + \underbrace{F_2(\eta) \xi^2(1-\eta)}_{\Xi_2} + \underbrace{F_3(\eta) \xi(1+\eta)\xi(1-\eta)}_{\Xi_3} + \underbrace{F_4(\eta) \xi(1-\eta)\xi(1+\eta)}_{\Xi_4},
\end{align*}
with $F_j(\eta)=\lim_{\eta'\rightarrow 0^+}H_j(\frac{1}{2}+iT,\frac{1}{2}+\eta'-iT,\frac{1}{2}+iT,\frac{1}{2}+\eta-iT)$. The explicit forms are
\begin{align*}
F_1 &= N^{-1-\eta}\frac{|\Lambda(1+\eta+2iT,\chi_1\chi_2)|^2}{\xi(2+2\eta) |\Lambda(1+2iT,\chi_1\chi_2)|^2} \prod_{p\mid N} (1+\frac{1}{p^{1+\eta}})^{-1}; \\
F_2 &= N^{-1} \frac{|\Lambda(1-\eta+2iT,\chi_1\chi_2)|^2 \Lambda(1-2\eta+2iT,\chi_1\chi_2)}{\xi(2-2\eta)\Lambda^2(1+2iT,\overline{\chi_1\chi_2}) \Lambda(1+2\eta-2iT,\overline{\chi_1\chi_2})} \prod_{p\mid N} (1+\frac{1}{p^{1-\eta}})^{-1}; \\
F_3 &= N^{-1-\eta} \frac{\Lambda(1-\eta+2iT,\chi_1\chi_2)\Lambda(1+\eta-2iT,\overline{\chi_1\chi_2})}{\xi(2) \Lambda(1+2iT,\chi_1\chi_2)\Lambda(1+2\eta-2iT,\overline{\chi_1\chi_2})} \prod_{p\mid N}\frac
{1-p^{-1+\eta}}{1-p^{-2}}; \\
F_4 &= N^{-1} \frac{\Lambda(1-\eta+2iT,\chi_1\chi_2)\Lambda(1+\eta-2iT,\overline{\chi_1\chi_2})}{\xi(2) \Lambda(1+2iT,\chi_1\chi_2)\Lambda(1+2\eta-2iT,\overline{\chi_1\chi_2})} \prod_{p\mid N}\frac{1-p^{-1-\eta}}{1-p^{-2}}.
\end{align*}

Further calculation shows $F_1(0)=F_2(0)=F_3(0)=F_4(0)=(\xi(2)\nu(N))^{-1}$, and
\begin{align*}
F_1'(0) &= F_1(0) \cdot \Big(-\log N &&+ 2\Re \frac{\Lambda'}{\Lambda}(1+2iT,\chi_1\chi_2) &&-2\frac{\xi'}{\xi}(2) &&+ \sum_{p\mid N}\frac{\log p}{p+1} \Big); \\
F_2'(0) &= F_2(0) \cdot \Big(&&- 6\Re\frac{\Lambda'}{\Lambda}(1+2iT,\chi_1\chi_2) &&+2\frac{\xi'}{\xi}(2) &&- \sum_{p\mid N}\frac{\log p}{p+1} \Big); \\
F_3'(0) &= F_3(0) \cdot \Big(-\log N &&-2\Re \frac{\Lambda'}{\Lambda}(1+2iT,\chi_1\chi_2) && &&- \sum_{p\mid N}\frac{\log p}{p+1} \Big); \\
F_4'(0) &= F_4(0) \cdot \Big(&&-2\Re\frac{\Lambda'}{\Lambda}(1+2iT,\chi_1\chi_2) && &&+\sum_{p\mid N}\frac{\log p}{p+1} \Big).
\end{align*}

Moreover, by $F_j''=(F_j\cdot \frac{F_j'}{F_j})' = F_j \cdot ((\frac{F_j'}{F_j})^2 + (\frac{F_j'}{F_j})')$, we see that
\begin{align*}
F_1''(0) 	&= F_1(0) \cdot \Big(-\log N + 2\Re \frac{\Lambda'}{\Lambda}(1+2iT,\chi_1\chi_2) -2\frac{\xi'}{\xi}(2) + \sum_{p\mid N}\frac{\log p}{p+1} \Big)^2 \\
		&+ F_1(0) \cdot \Big(\log^2 N + 2\Re (\frac{\Lambda'}{\Lambda})'(1+2iT,\chi_1\chi_2) -4(\frac{\xi'}{\xi})'(2) - \sum_{p\mid N}\frac{p \log^2 p}{(p+1)^2} \Big); \\
F_2''(0)	&= F_2(0) \cdot \Big(- 6\Re\frac{\Lambda'}{\Lambda}(1+2iT,\chi_1\chi_2) +2\frac{\xi'}{\xi}(2) - \sum_{p\mid N}\frac{\log p}{p+1} \Big)^2 \\
		&+ F_2(0) \cdot \Big(2(\frac{\Lambda'}{\Lambda})'(1+2iT,\chi_1\chi_2) + 6\Im(\frac{\Lambda'}{\Lambda})'(1+2iT,\chi_1\chi_2) -4(\frac{\xi'}{\xi})'(2) - \sum_{p\mid N}\frac{p \log^2 p}{(p+1)^2} \Big); \\
F_3''(0)	&= F_3(0) \cdot \Big(-\log N -2\Re \frac{\Lambda'}{\Lambda}(1+2iT,\chi_1\chi_2) - \sum_{p\mid N}\frac{\log p}{p-1} \Big)^2 \\
		&+ F_3(0) \cdot \Big(\log^2 N +2\Re (\frac{\Lambda'}{\Lambda})'(1+2iT,\chi_1\chi_2) -4(\frac{\Lambda'}{\Lambda})'(1-2iT,\overline{\chi_1\chi_2}) + \sum_{p\mid N}\frac{p\log^2 p}{(p-1)^2} \Big); \\
F_4''(0)	&= F_4(0) \cdot  \Big(-2\Re\frac{\Lambda'}{\Lambda}(1+2iT,\chi_1\chi_2) +\sum_{p\mid N}\frac{\log p}{p-1} \Big)^2 \\
		&+F_4(0) \cdot \Big(2\Re (\frac{\Lambda'}{\Lambda})'(1+2iT,\chi_1\chi_2) -4(\frac{\Lambda'}{\Lambda})'(1-2iT,\overline{\chi_1\chi_2})+ \sum_{p\mid N}\frac{p\log^2 p}{(p-1)^2} \Big).
\end{align*}

Let $\xi(s)=(s-1)^{-1} + a +b(s-1) + O((s-1)^2)$ for some $a,b$, then for $\eta$ around $0$, we have
\begin{align*}
&\Xi_1 &&= +\frac{F_1(0)}{\eta^2} &&+ \frac{F_1'(0)+2aF_1(0)}{\eta} &&+ \frac{F''_1(0)}{2}+2aF'_1(0) &&+ (a^2+2b)F_1(0) &&+ O(\eta); \\
&\Xi_2 &&= +\frac{F_2(0)}{\eta^2} &&+ \frac{F_2'(0)-2aF_2(0)}{\eta} &&+ \frac{F''_2(0)}{2}-2aF'_2(0) &&+ (a^2+2b)F_2(0) &&+ O(\eta); \\
&\Xi_3 &&= -\frac{F_3(0)}{\eta^2} &&- \frac{F_3'(0)}{\eta} &&- \frac{F_3''(0)}{2} &&+ (a^2-2b)F_3(0) &&+ O(\eta); \\
&\Xi_4 &&= -\frac{F_4(0)}{\eta^2} &&- \frac{F_4'(0)}{\eta} &&- \frac{F_4''(0)}{2} &&+ (a^2-2b)F_4(0) &&+ O(\eta).
\end{align*}

Thus, the coefficients of the $\frac{1}{\eta^2}$ and $\frac{1}{\eta}$ of $I_2$ vanish by cancellation, and its constant term equals
\begin{multline*}
\frac{1}{2}(F_1''(0)+F_2''(0)-F_3''(0)-F_4''(0)) + 2a(F_1'(0)-F_2'(0)) + 4a^2F_1(0) \\
= \frac{1}{2}(F_1''(0)+F_2''(0)-F_3''(0)-F_4''(0)) + \frac{4a^2 -2a \log N +16a \Re \frac{\Lambda'}{\Lambda}(1+2iT,\chi_1\chi_2)}{\xi(2)\nu(N)}.
\end{multline*}
A well-known fact being $\sum_{p\mid N}\frac{\log p}{p} = O(\log\log N)$, we have
\begin{multline*}
\xi(2)\nu(N) I_2 = 4 \Re(\frac{\Lambda'}{\Lambda})'(1+2iT,\chi_1\chi_2) + 16 \Re (\frac{\Lambda'}{\Lambda})^2  (1+2iT,\chi_1\chi_2)+16\Big|\frac{\Lambda'}{\Lambda}\Big|^2(1+2iT,\chi_1\chi_2) \\
-4\log N \Re\frac{\Lambda'}{\Lambda}(1+2iT,\chi_1\chi_2) + O\Big(\log N + \log\log N \Big| \Re \frac{\Lambda'}{\Lambda}(1+2iT,\chi_1\chi_2) \Big|\Big).
\end{multline*}
Since $(\frac{\Lambda'}{\Lambda})'=\frac{\Lambda''}{\Lambda} - (\frac{\Lambda'}{\Lambda})^2$, and
\begin{multline*}PY
\frac{\Lambda''}{\Lambda}(s,\chi_1\chi_2) = \frac{1}{4}\log^2 \frac{N}{\pi} + \frac{1}{4}\frac{\Gamma''}{\Gamma}(\frac{s}{2})+\frac{L''}{L}(s,\chi_1\chi_2) \\
+ \frac{1}{2}\log\frac{N}{\pi}\frac{\Gamma'}{\Gamma}(\frac{s}{2}) + \log \frac{N}{\pi}\frac{L'}{L}(s,\chi_1\chi_2)+\frac{\Gamma'}{\Gamma}(\frac{s}{2})\frac{L'}{L}(s,\chi_1\chi_2),
\end{multline*}
we can see that
\begin{multline*}
    \xi(2)\nu(N) I_2 = \log^2 N + 4\Re \frac{L''}{L}(1+2iT,\chi_1\chi_2) + 12 \Re (\frac{\Lambda'}{\Lambda})^2  (1+2iT,\chi_1\chi_2)+16\Big|\frac{\Lambda'}{\Lambda}\Big|^2(1+2iT,\chi_1\chi_2) \\
-4\log N \Re\frac{\Lambda'}{\Lambda}(1+2iT,\chi_1\chi_2) + O\Big(\log N \Big| \frac{L'}{L}(s,\chi_1\chi_2) \Big| + \log\log N \Big| \Re \frac{\Lambda'}{\Lambda}(1+2iT,\chi_1\chi_2) \Big|\Big),
\end{multline*}
and after substituting $\frac{\Lambda'}{\Lambda}(s,\chi_1\chi_2) = \frac{1}{2}\log N + \frac{1}{2}\frac{\Gamma'}{\Gamma}(\frac{s}{2}) + \frac{L'}{L}(s,\chi_1\chi_2)$, we arrive at
\begin{multline*}
    \xi(2)\nu(N) I_2 = 4\log^2 N + 4\Re\frac{L''}{L}(1+2iT,\chi_1\chi_2) \\
    +O\Big(\log N \Big| \frac{L'}{L}(s,\chi_1\chi_2) \Big|+ \Big| \frac{L'}{L}(s,\chi_1\chi_2) \Big|^2 + \log\log N \Big| \Re \frac{\Lambda'}{\Lambda}(1+2iT,\chi_1\chi_2) \Big|\Big).
\end{multline*}
Noting $\xi(2)=\frac{\pi}{6}$ and $\tfrac{\Lambda'}{\Lambda} = \tfrac{L'}{L} + \log N + O_{_T}(1)$, we complete the proof.


\begin{thebibliography}{}
\bibitem[BM]{BM}
V. Blomer and D. Mili\'cevic,
\textit{The second moment of twisted modular L-functions}, Geom. Funct. Anal.,
25 (2015) No. 2, 453--516. 

\bibitem[BlK]{BlK}
V. Blomer and R. Khan,
\emph{Twisted moments of $L$-functions and spectral reciprocity,} Duke Math. J., Volume 168, Number 6 (2019), 1109--1177.

\bibitem[BuK]{BuK}
J. Buttcane and R. Khan,
\textit{On the Fourth Moment of Hecke Maass Forms and the Random Wave Conjecture,} Compos. Math., Volume 153, Issue 7, 1479--1511.

\bibitem[DK1]{DK1} G. Djankovi\'c and R. Khan,
\emph{A conjecture for the regularized fourth moment of Eisenstein series, J. Number Theory,} 182 (2018), 236--257.

\bibitem[DK2]{DK2} G. Djankovi\'c and R. Khan,
\emph{On the Random Wave Conjecture for Eisenstein series,} International Mathematics Research Notices, rny 266.

\bibitem[GR]{GR}
I.S. Gradshteyn and I.M. Ryzhik,
\emph{Table of integrals, series, and products. Translated from the Russian. Sixth edition.} Translation edited and with a preface by Alan Jeffrey and Daniel Zwillinger. Academic Press, Inc., San Diego, CA, 2000.

\bibitem[H]{H}
P. Humphries,
\emph{Equidistribution in shrinking sets and $L^4$-norm bounds for automorphic forms,}

\bibitem[HK]{HK}
P. Humphries and R. Khan,
\emph{On the random wave conjecture for dihedral Maaß forms,} Geom. Funct. Anal. (2020).

\bibitem[HS]{HS}
R. Holowinsky and K. Soundararajan,
\emph{Mass equidistribution for Hecke eigenforms,} Ann. of Math., 
Volume 172 (2010), Issue 2, 1517--1528.

\bibitem[HR]{HR} D. Hejhal and B. Rackner,
\emph{On the Topography of Maass Waveforms for $PSL(2,\mathbb{Z})$, Experimental Mathematics}, Vol. 1 (1992), No. 4, 275--305 1,4.

\bibitem[HKLW]{HKLW}
T. Hulse, C. I. Kuan, D. Lowry-Duda and A. Walker,
\emph{Second Moments in the Generalized Gauss Circle Problem},
Forum of Mathematics, Sigma, 6, E24. doi:10.1017/fms.2018.26.

\bibitem[IK]{IK}
H. Iwaniec and E. Kowalski,
\emph{Analytic Number Theory}, volume~53 of {\em Colloquium Publications}. American Mathematical Society, Providence, RI, 2004.

\bibitem[KMV]{KMV}
E. Kowalski, P. Michel and J. VanderKam,
\textit{Rankin-Selberg L-functions in the level aspect,} Duke Math. J.,
114 (2002), 123--191.

\bibitem[KS]{KS}
H. Kim and P. Sarnak, 
\emph{Appendix 2: Refined estimates towards the Ramanujan and Selberg conjectures. J. Amer. Math. Soc.},
16 (2003), 175--181.

\bibitem[L]{L}
E. Lindenstrauss,
\emph{Invariant measures and arithmetic quantum unique ergodicity,} Ann. of Math.,
(2), 163(1): 165--219, 2006.

\bibitem[LS]{LS}
W. Luo and P. Sarnak,
\emph{Quantum ergodicity of eigenfunctions on $PSL_2(\mathbb{Z})\backslash\mathbb{H}^2$,} Inst. Hautes \'Etudes Sci. Publ. Math.,
(81): 207--237, 1995.

\bibitem[MV]{MV}
P. Michel and A. Venkatesh,
\textit{The subconvexity problem for $GL_2$,} Publ. Math. Inst. Hautes \'Etudes Sci.,
No. 111 (2010), 171--271.

\bibitem[N1]{N1}
P. Nelson,
\emph{Equidistribution of cusp forms in the level aspect,} Duke Math J.,
160 (2011), No. 3, 467--501.

\bibitem[N2]{N2}
P. Nelson,
\emph{Subconvex equidistribution of cusp forms: reduction to Eisenstein observables,} Duke Math. J. 168 (2019), no. 9, 1665--1722.

\bibitem[NPS]{NPS}
P. Nelson, A. Pitale and A. Saha,
\emph{Bounds for Rankin-Selberg Integrals and quantum unique ergodicity for powerful levels},  J. Amer. Math. Soc.
Vol 27, No. 1, January 2014, pp 147--191.

\bibitem[PY]{PY}
J. Pan and M. Young,
\emph{Quantum unique ergodicity for Eisenstein series in the level aspect,} https://arxiv.org/abs/1910.07164.


\bibitem[RS]{RS}
Z. Rudnick and P. Sarnak,
\textit{The behaviour of eigenstates of arithmetic hyperbolic manifolds,} Comm. Math. Phys.
161 (1994), No. 1, 195--213.

\bibitem[So]{So}
K. Soundararajan,
\textit{Quantum unique ergodicity for $SL_2(\mathbb{Z})\backslash\mathbb{H}$,} Ann. of Math.
(2)172: 1529--1538, 2010.

\bibitem[Sp]{Sp} F. Spinu,
\emph{The $L^4$-norm of Eisenstein series,} PhD thesis, Princeton University.

\bibitem[Y]{Y} M. Young,
\emph{Explicit calculations with Eisenstein series},
 J. Number Theory 199 (2019),1--48.

\bibitem[Z]{Z}
D. Zagier,
\emph{The Rankin-Selberg method on automorphic forms which are not of rapid decay},
J. Fac. Sci. Univ. Tokyo Sect. IA Math. 28 (1981), No. 3, 415--437 (1982).

\end{thebibliography}
\end{document}